\journal{Journal of \LaTeX\ Templates}
\newtheorem{theo}{Theorem}[section]
\newtheorem{prop}[theo]{Proposition}
\newtheorem{lem}[theo]{Lemma}
\theoremstyle{definition} \newtheorem{defi}[theo]{Definition}}
\theoremstyle{definition} \newtheorem{algo}[theo]{Algorithm}}
\theoremstyle{definition} }
\theoremstyle{definition} \newtheorem{ex}[theo]{Example}}
\theoremstyle{definition} }
\theoremstyle{remark} }
\theoremstyle{definition} \newtheorem{fact}[theo]{Facts}}
\newcommand{\Sig}{\mathfrak{S}}
\newcommand{\des}{\text{des}}
\newcommand{\DES}{\text{DES}}
\newcommand{\exc}{\text{exc}}
\newcommand{\EXC}{\text{EXC}}\newcommand{\maj}{\text{maj}}
\newcommand{\ides}{\text{ides}}
\newcommand{\inv}{\text{inv}}
\newcommand{\INV}{\text{INV}}
\newcommand{\asc}{\text{asc}}
\newcommand{\ASC}{\text{ASC}}
\newcommand{\amaj}{\text{amaj}}
\renewenvironment{proof}{\noindent{\bf Proof.}}{\qed}
\begin{document}

\begin{frontmatter}

\title{A new bijection relating $q$-Eulerian polynomials}

\author{Ange Bigeni\fnref{myfootnote}}
\address{Institut Camille Jordan\\
Université Claude Bernard Lyon 1\\
43 boulevard du 11 novembre 1918\\
69622 Villeurbanne cedex\\
France 
}
\ead{bigeni@math.univ-lyon1.fr}

\begin{abstract}
On the set of permutations of a finite set, we construct a bijection which maps the 3-vector of statistics $(\maj-\exc,\des,\exc)$ to a 3-vector $(\maj_2,\widetilde{\des_2},\inv_2)$ associated with the $q$-Eulerian polynomials introduced by Shareshian and Wachs in \textit{Chromatic quasisymmetric functions, arXiv:1405.4269(2014).}
\end{abstract}

\begin{keyword}
$q$-Eulerian polynomials\sep descents\sep ascents \sep major index \sep exceedances\sep inversions.
\end{keyword}

\end{frontmatter}

\linenumbers

\section*{Notations}

For all pair of integers $(n,m)$ such that $n<m$, the set $\{n,n+1,\hdots,m\}$ is indifferently denoted by $[n,m]$,$]n-1,m]$,$[n,m+1[$ or $]n-1,m+1[$.

The set of positive integers $\{1,2,3,\hdots\}$ is denoted by $\mathbb{N}_{>0}$.

For all integer $n \in \mathbb{N}_{>0}$, we denote by $[n]$ the set $[1,n]$ and by $\Sig_n$ the set of the permutations of $[n]$. By abuse of notation, we assimilate every $\sigma \in \Sig_n$ with the word $\sigma(1) \sigma(2) \hdots \sigma(n)$.

If a set $S = \{n_1,n_2,\hdots,n_k\}$ of integers is such that $n_1 < n_2 < \hdots < n_k$, we sometimes use the notation $S = \{n_1 < n_2 < \hdots < n_k \}$.

\section{Introduction}
\label{sec:intro}
Let $n$ be a positive integer and $\sigma \in \Sig_n$. A \textit{descent} (respectively \textit{exceedance point}) of $\sigma$ is an integer $i \in [n-1]$ such that $\sigma(i) > \sigma(i+1)$ (resp. $\sigma(i) > i$). The set of descents (resp. exceedance points) of $\sigma$ is denoted by $\DES(\sigma)$ (resp. $\EXC(\sigma)$) and its cardinal by $\des(\sigma)$ (resp. $\exc(\sigma)$). The integers $\sigma(i)$ with $i \in \EXC(\sigma)$ are called exceedance values of $\sigma$.

It is due to MacMahon~\cite{macmahon} and Riordan~\cite{riordan} that $$\sum\limits_{\sigma \in \Sig_n} t^{\des(\sigma)} = \sum\limits_{\sigma \in \Sig_n} t^{\exc(\sigma)} = A_n(t)$$ where $A_n(t)$ is the $n$-th Eulerian polynomial~\cite{euler}. A statistic equidistributed with des or exc is said to be \textit{Eulerian}. The statistic ides defined by $\ides(\sigma) = \des(\sigma^{-1})$ obviously is Eulerian.

The \textit{major index} of a permutation $\sigma \in \Sig_n$ is defined as $$\maj(\sigma) = \sum\limits_{i \in \DES(\sigma)} i.$$ It is also due to MacMahon that
$$\sum\limits_{\sigma \in \Sig_n} q^{\maj(\sigma)} = \prod\limits_{i=1}^n \dfrac{1-q^i}{1-q}.$$
A statistic equidistributed with maj is said to be \textit{Mahonian}. Among Mahonian statistics is the statistic inv, defined by $\inv(\sigma) = |\INV(\sigma)|$ where $\INV(\sigma)$ is the set of \textit{inversions} of a permutation $\sigma \in \Sig_n$, \textit{i.e.} the pairs of integers $(i,j) \in [n]^2$ such that $i < j$ and $\sigma(i) > \sigma(j)$.

In \cite{shareshianwachs}, the authors consider analogous versions of the above statistics : let $\sigma \in \Sig_n$, the set of \textit{2-descents} (respectively \textit{2-inversions}) of $\sigma$ is defined as $$\DES_2(\sigma) = \{i \in [n-1], \sigma(i) > \sigma(i+1)+1\}$$ (resp. $$\INV_2(\sigma) = \{1 \leq i < j \leq n, \sigma(i) = \sigma(j)+1\})$$ and its cardinal is denoted by $\des_2(\sigma)$ (resp. $\inv_2(\sigma)$).

It is easy to see that $\inv_2(\sigma) = \ides(\sigma)$.
The \textit{2-major index} of $\sigma$ is defined as $$\maj_2(\sigma) = \sum\limits_{i \in \DES_2(\sigma)} i.$$

By using quasisymmetric function techniques, the authors of \cite{shareshianwachs} proved the equality
\begin{equation} \label{eq:shareshianwachs}
\sum_{\sigma \in \mathfrak{S}_n} x^{\maj_{2}(\sigma)} y^{\inv_{2}(\sigma)} = \sum_{\sigma \in \mathfrak{S}_n} x^{\maj(\sigma)-\exc(\sigma)} y^{\exc(\sigma)}.
\end{equation}

Similarly, by using the same quasisymmetric function method as in \cite{shareshianwachs}, the authors of \cite{hanceli} proved the equality
\begin{equation}
 \label{eq:hanceli}
 \sum\limits_{\sigma \in \Sig_n} x^{\amaj_2(\sigma)} y^{\widetilde{\asc_2}(\sigma)} z^{\ides(\sigma)} = \sum\limits_{\sigma \in \Sig_n} x^{\maj(\sigma)-\exc(\sigma)} y^{\des(\sigma)} z^{\exc(\sigma)}
\end{equation}
where $\asc_2(\sigma)$ is the number of \textit{2-ascents} of a permutation $\sigma \in \Sig_n$, \textit{i.e.} the elements of the set $\ASC_2(\sigma) = \{i \in [n-1], \sigma(i) < \sigma(i+1)+1\}$, which rises the statistic $\amaj_2$ defined by $$\amaj_2(\sigma) = \sum\limits_{i \in \ASC_2(\sigma)} i,$$ and where
$$\widetilde{\asc_2}(\sigma) = \begin{cases} \asc_2(\sigma) &\text{if $\sigma(1) = 1$,} \\ \asc_2(\sigma)+1 &\text{if $\sigma(1) \neq 1$.}  \end{cases}$$

\begin{defi} \label{def:des2tilde}
Let $\sigma \in \Sig_n$. We consider the smallest $2$-descent $d_2(\sigma)$ of $\sigma$ such that $\sigma(i) = i$ for all $i \in [d_2(\sigma)-1]$ (if there is no such $2$-descent, we define $d_2(\sigma)$ as $0$ and $\sigma(0)$ as $n+1$).

Now, let $d_2'(\sigma) > d_2(\sigma)$ be the smallest $2$-descent of $\sigma$ greater than $d_2(\sigma)$ (if there is no such $2$-descent, we define $d_2'(\sigma)$ as $n$).

We define an inductive property $\mathcal{P}(d_2(\sigma))$ by :
\begin{enumerate}
\item $\sigma(d_2(\sigma)) < \sigma(i)$ for all $(i,j) \in \INV_2(\sigma)$ such that $d_2(\sigma) < i < d_2'(\sigma)$;
\item if $(d_2'(\sigma),j) \in \INV_2(\sigma)$ for some $j$, then either $\sigma(d_2(\sigma)) < \sigma(d_2'(\sigma))$, or $d_2'(\sigma)$ has the property $\mathcal{P}(d_2'(\sigma))$ (where the role of $d_2(\sigma)$ is played by $d_2'(\sigma)$ and that of $d_2'(\sigma)$ by $d_2''(\sigma)$ where $d_2''(\sigma) > d_2'(\sigma)$ is the smallest $2$-descent of $\sigma$ greater than $d_2'(\sigma)$, defined as $n$ if there is no such $2$-descent).
\end{enumerate}
This property is well-defined because $(n,j) \not\in \INV_2(\sigma)$ for all $j \in [n]$.

Finally, we define a statistic $\widetilde{\des_2}$ by :
$$\widetilde{\des_2}(\sigma) = \begin{cases} \des_2(\sigma) &\text{if the property $\mathcal{P}(d_2(\sigma))$ is true,} \\ \des_2(\sigma)+1 &\text{otherwise.}  \end{cases}$$
\end{defi}

In the present paper, we prove the following theorem. 

\begin{theo} \label{theo:existsbijection}
There exists a bijection $\varphi : \mathfrak{S}_n \rightarrow \mathfrak{S}_n$ such that
$$(\maj_{2}(\sigma),\widetilde{\des_2}(\sigma),\inv_{2}(\sigma)) = (\maj(\varphi(\sigma))-\exc(\varphi(\sigma)),\des(\varphi(\sigma)),\exc(\varphi(\sigma))).$$
\end{theo}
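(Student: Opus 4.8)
The plan is to construct $\varphi$ explicitly and inductively, tracking how the five statistics $\maj_2, \widetilde{\des_2}, \inv_2$ on the source side and $\maj - \exc, \des, \exc$ on the target side behave under a well-chosen operation. The natural first step is to understand the "trivial" part of the structure: by Definition~\ref{def:des2tilde}, a permutation $\sigma$ has an initial segment of fixed points $\sigma(i) = i$ for $i \in [d_2(\sigma)-1]$, and these fixed points contribute nothing to $\maj_2$, $\des_2$, or the property $\mathcal P$. On the exceedance side, fixed points are not exceedance points and contribute nothing to $\exc$; moreover a permutation with $\sigma(i)=i$ for small $i$ has no small descents either. So I would first reduce to the case where the first 2-descent occurs early, and set up a recursion on $n$ (or on the position of the first 2-descent) in which $\varphi$ peels off a controlled prefix/suffix.

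**Next**, the key idea is that $\inv_2(\sigma) = \ides(\sigma)$ (noted in the excerpt) and that $\exc$ is Eulerian, so on the target side I want $\exc(\varphi(\sigma)) = \ides(\sigma)$. A clean way to get a bijection matching $\exc$ with $\ides$ is to use (a variant of) the fundamental-bijection / Foata-type correspondence, or Shareshian--Wachs's own bijection behind \eqref{eq:shareshianwachs}; but the real content here is to simultaneously control the \emph{second} coordinate $\widetilde{\des_2} \leftrightarrow \des$. So I would build $\varphi$ as a composition: first a bijection $\psi$ realizing \eqref{eq:shareshianwachs} (matching $(\maj_2,\inv_2)$ with $(\maj-\exc,\exc)$), then argue — or modify $\psi$ so — that $\des_2$ and $\des$ are already aligned up to the $\pm 1$ correction encoded by $\mathcal P$ and by the parallel correction on the exceedance side. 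Concretely, I expect $\des(\varphi(\sigma))$ to count, roughly, the "2-descent blocks" of $\sigma$, and the property $\mathcal P(d_2(\sigma))$ to be exactly the combinatorial condition distinguishing whether the leading block merges with the fixed-point prefix or creates an extra descent after transport — mirroring the $\sigma(1)=1$ versus $\sigma(1)\neq 1$ dichotomy in $\widetilde{\asc_2}$ from \eqref{eq:hanceli}.

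**Then** I would verify the statistic-matching by induction: assuming $\varphi$ is defined and correct on $\Sig_{n-1}$ (or on permutations with a shorter fixed prefix), I insert the value $n$ (or adjust the prefix) in a position dictated by where it sits relative to $d_2(\sigma)$, $d_2'(\sigma)$, and the inversion data in Definition~\ref{def:des2tilde}, and check that each insertion changes $(\maj_2,\widetilde{\des_2},\inv_2)$ and $(\maj-\exc,\des,\exc)$ by equal increments. The bijectivity of $\varphi$ follows because each elementary step is invertible: the position of $n$ on the target side can be read off from $\EXC(\varphi(\sigma))$ and $\DES(\varphi(\sigma))$, and the prefix data on the source side can be read off from $d_2$, $d_2'$ and $\mathcal P$.

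**The hard part will be** the bookkeeping around $\widetilde{\des_2}$: Definition~\ref{def:des2tilde} makes the second coordinate genuinely subtle, since the property $\mathcal P(d_2(\sigma))$ is itself recursive (it may cascade through $d_2', d_2'', \ldots$), so I expect most of the work to lie in proving a clean characterization of when $\widetilde{\des_2}(\sigma) = \des_2(\sigma)$ versus $\des_2(\sigma)+1$ — something like "$\mathcal P$ holds iff the transported permutation has its descent set equal to (rather than one more than) the naive image of the 2-descent set." Getting the elementary step of $\varphi$ to respect this exactly, in all the branch cases of the recursion, is where I anticipate the argument to be delicate, and it is likely why the paper isolates $\widetilde{\des_2}$ as a separate definition rather than folding it into a one-line formula.
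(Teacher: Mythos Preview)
Your proposal is a plan rather than a proof, and the plan diverges sharply from what the paper actually does. The paper does \emph{not} proceed by induction on $n$, nor by inserting $n$ into a smaller permutation, nor by composing with a pre-existing bijection realizing~(\ref{eq:shareshianwachs}). Instead it builds $\varphi(\sigma)$ directly from scratch: from the $2$-descent and $2$-inversion data of $\sigma$ it runs an explicit combinatorial algorithm (Algorithm~\ref{algo:suiteck0}) to produce a sequence $(c_k(\sigma))_{k}$ with $\sum_k c_k(\sigma)=\inv_2(\sigma)$, uses this to lay out an unlabelled ``planar graph'' $\mathcal G(\sigma)$ of circles and dots whose descents sit at the shifted positions $d^k(\sigma)=d_2^k(\sigma)+c_k(\sigma)$, and then labels the circles and dots by two further explicit algorithms. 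The statistic identities fall out immediately from $\DES(\tau)=\{d^k(\sigma)\}$ and $\tau(\EXC(\tau))=\{j_l(\sigma)\}$; the equality $\widetilde{\des_2}(\sigma)=\des_2(\sigma)\Leftrightarrow c_0(\sigma)=0$ is what links the recursive property $\mathcal P$ to $\des(\tau)$. Bijectivity is proved not by reversing an inductive step but by constructing $\varphi^{-1}$ through a second, independent graphical algorithm on $\tau$.

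There is also a genuine gap in your outline. You propose to take ``Shareshian--Wachs's own bijection behind~(\ref{eq:shareshianwachs})'' as a black box $\psi$ and then tweak it; but (\ref{eq:shareshianwachs}) was proved in \cite{shareshianwachs} by quasisymmetric-function techniques, not bijectively, so no such $\psi$ is available to compose with --- indeed, producing a bijective proof of~(\ref{eq:shareshianwachs}) is precisely part of what Theorem~\ref{theo:existsbijection} accomplishes. Likewise your inductive insertion scheme is left entirely unspecified: you never say where $n$ goes on either side, and the cascading nature of $\mathcal P(d_2(\sigma))$ through $d_2',d_2'',\ldots$ makes it far from clear that any local insertion rule can track $\widetilde{\des_2}$ correctly. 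As written, the proposal identifies the right \emph{difficulties} but supplies none of the constructions needed to resolve them.
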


As a straight corollary of Theorem \ref{theo:existsbijection}, we obtain the equality

\begin{equation}
\label{eq:bigeni}
\sum\limits_{\sigma \in \Sig_n} x^{\maj_2(\sigma)} y^{\widetilde{\des_2}(\sigma)} z^{\inv_2(\sigma)} = \sum\limits_{\sigma \in \Sig_n} x^{\maj(\sigma)-\exc(\sigma)} y^{\des(\sigma)} z^{\exc(\sigma)}
\end{equation}

which implies Equality (\ref{eq:shareshianwachs}).

The rest of this paper is organised as follows. 

In Section \ref{sec:graphic}, we introduce two graphical representations of a given permutation so as to highlight either the statistic $(\maj-\exc,\des,\exc)$ or $(\maj_2,\widetilde{\des_2},\inv_2)$. Practically speaking, the bijection $\varphi$ of Theorem \ref{theo:existsbijection} will be defined by constructing one of the two graphical representations of $\varphi(\sigma)$ for a given permutation $\sigma \in \Sig_n$.

We define $\varphi$ in Section \ref{sec:varphi}. 

In Section \ref{sec:varphim1}, we prove that $\varphi$ is bijective by constructing $\varphi^{-1}$.

\section{Graphical representations}
\label{sec:graphic}

\subsection{Linear graph}

Let $\sigma \in \Sig_n$. The linear graph of $\sigma$ is a graph whose vertices are (from left to right) the integers $\sigma(1), \sigma(2), \hdots, \sigma(n)$ aligned in a row, where every $\sigma(k)$ (for  $k \in \DES_2(\sigma)$) is boxed, and where an arc of circle is drawn from $\sigma(i)$ to $\sigma(j)$ for every $(i,j) \in \INV_2(\sigma)$.

For example, the permutation $\sigma = 34251 \in \Sig_5$ (such that \linebreak[4]$(\maj_2(\sigma),\widetilde{\des_2}(\sigma),\inv_2(\sigma)) =~(6,3,2) $)  has the linear graph depicted in Figure \ref{fig:FIGm1}.

\begin{figure}[!h]
\centering
\includegraphics[width=2.5cm]{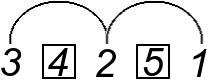}
\caption{Linear graph of $\sigma = 34251 \in \Sig_5$.}
\label{fig:FIGm1}
\end{figure}

\subsection{Planar graph}

Let $\tau \in \Sig_n$. The planar graph of $\tau$ is a graph whose vertices are the integers $1,2,...,n$, organized in ascending and descending slopes (the height of each vertex doesn't matter) such that the $i$-th vertex (from left to right) is the integer $\tau(i)$, and where every vertex $\tau(i)$ with $i \in \EXC(\tau)$ is encircled.

For example, the permutation $\tau = 32541 \in \Sig_5$ (such that \linebreak[4]$(\maj(\tau)-\exc(\tau),\des(\tau),\exc(\tau)) = (6,3,2))$ has the planar graph depicted in Figure \ref{fig:FIG0}.

\begin{figure}[!h]
\centering
\includegraphics[width=2.4cm]{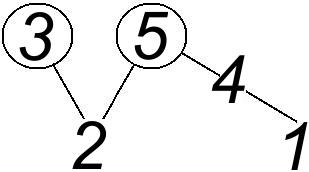}
\caption{Planar graph of $\tau = 32541 \in \Sig_5$.}
\label{fig:FIG0}
\end{figure}

\section{Definition of the map $\varphi$ of Theorem \ref{theo:existsbijection}}
\label{sec:varphi}

Let $\sigma \in \mathfrak{S}_n$. We set $(r,s) = (\des_2(\sigma),\inv_2(\sigma))$, and 
\begin{align*}
\DES_2(\sigma) &= \left \{d_2^k(\sigma), k \in [r]  \right \},\\
\INV_2(\sigma) &= \{(i_l(\sigma),j_l(\sigma)),l \in [s]\}
\end{align*}
with $d_2^k(\sigma) < d_2^{k+1}(\sigma)$ for all $k$ and $i_l(\sigma) < i_{l+1}(\sigma)$ for all $l$.

We intend to define $\varphi(\sigma)$ by constructing its planar graph. To do so, we first construct (in Subsection \ref{subsec:graph}) a graph $\mathcal{G}(\sigma)$ made of $n$ circles or dots organized in ascending or descending slopes such that two consecutive vertices are necessarily in a same descending slope if the first vertex is a circle and the second vertex is a dot. Then, in Subsection \ref{subsec:labelling}, we label the vertices of this graph with the integers $1,2,\hdots,n$ in such a way that, if $y_i$ is the label of the $i$-th vertex $v_i(\sigma)$ (from left to right) of $\mathcal{G}(\sigma)$ for all $i \in [n]$, then :

\begin{enumerate}
\item $y_i <y_{i+1}$ if and only if $v_i$ and $v_{i+1}$ are in a same ascending slope;
\item $y_i > i$ if and only if $v_i$ is a circle.
\end{enumerate}

The permutation $\tau = \varphi(\sigma)$ will then be defined as $y_1 y_2\hdots y_n$, \textit{i.e.} the permutation whose planar graph is the labelled graph $\mathcal{G}(\sigma)$.

With precision, we will obtain 
$$\tau\left(\EXC(\tau)\right)=\{j_k(\sigma),k\in [s]\}$$
(in particular $\exc(\tau) = s = \inv_2(\sigma)$), and 
$$\DES(\tau) = \begin{cases} \{d^k(\sigma),k \in [1,r]\} \text{ if $\widetilde{\des_2}(\sigma) = r$},\\
\{d^k(\sigma),k \in [0,r]\} \text{ if $\widetilde{\des_2}(\sigma) = r+1$}
\end{cases}$$
 for integers $0 \leq d^0(\sigma) < d^1(\sigma) < \hdots < d^{r}(\sigma) \leq n$ (with $d^0(\sigma) = 0 \Leftrightarrow \widetilde{\des_2}(\sigma) = \des_2(\sigma)$) defined by
$$d^k(\sigma) = d_2^k(\sigma) + c_k(\sigma)$$
(with $d_2^0(\sigma) :=0$) where $(c_{k}(\sigma))_{k \in [0,r]}$ is a sequence defined in Subsection \ref{subsec:graph} such that $\sum_k c_k(\sigma) = \inv_2(\sigma) = \exc(\tau)$. Thus, we will obtain $\des(\tau) = \widetilde{\des_2}(\sigma)$ and $\maj(\tau) = \maj_2(\sigma) + \exc(\tau)$.

\subsection{Construction of the unlabelled graph $\mathcal{G}(\sigma)$}
\label{subsec:graph}

We set $\left( d_2^0(\sigma),\sigma(d_2^0(\sigma)) \right) =(0,n+1)$ and $\left( d_2^{r+1}(\sigma),\sigma(n+1) \right) = (n,0)$.

For all $k \in [r]$, we define the top $t_k(\sigma)$ of the 2-descent $d_2^k(\sigma)$ as

\begin{equation} \label{eq:definitionbk} 
t_k(\sigma) = \min\{d_2^l(\sigma),1\leq l \leq k,d_2^l(\sigma) = d_2^k(\sigma) - (k-l)\},
\end{equation}

in other words $t_k(\sigma)$ is the smallest 2-descent $d_2^l(\sigma)$ such that the 2-descents $d_2^l(\sigma),d_2^{l+1}(\sigma),\hdots,d_2^k(\sigma)$ are consecutive integers.

The following algorithm provides a sequence $(c^0_{k}(\sigma))_{k \in [0,r]}$ of nonnegative integers. 

\begin{algo}
\label{algo:suiteck0}
Let $I_r(\sigma) = \INV_2(\sigma)$. For $k$ from $r = \des_2(\sigma)$ down to $0$, we consider the set $S_k(\sigma)$ of sequences $(i_{k_1}(\sigma),i_{k_2}(\sigma), \hdots, i_{k_m}(\sigma))$ such that :
\begin{enumerate} \label{algo}
\item $(i_{k_p}(\sigma),j_{k_p}(\sigma)) \in I_{k}(\sigma)$ for all $p \in [m]$;
\item $t_k(\sigma) \leq i_{k_1}(\sigma) < i_{k_2}(\sigma) < \hdots < i_{k_m}(\sigma)$;
\item $\sigma(i_{k_1}(\sigma)) < \sigma(i_{k_2}(\sigma)) < \hdots < \sigma(i_{k_m}(\sigma))$.
\end{enumerate} 
The \textit{length} of such a sequence is defined as $l = \sum_{p=1}^m n_p$ where $n_p$ is the number of \textit{consecutive} 2-inversions whose beginning is $i_{k_p}$, \textit{i.e.} the maximal number $n_p$ of 2-inversions $(i_{k_p^1}(\sigma),j_{k_p^1}(\sigma)),(i_{k_p^2}(\sigma),j_{k_p^2}(\sigma)),\hdots,(i_{k^{n_p}_p}(\sigma),j_{k^{n_p}_p}(\sigma))$ such that $k^1_p = k_p$ and $j_{k^i_p}(\sigma) = i_{k^{i+1}_p}(\sigma)$ for all $i$.
If $I_k(\sigma) \neq \emptyset$, we consider the sequence $(i_{k^{max}_1}(\sigma),i_{k^{max}_2}(\sigma), \hdots, i_{k^{max}_m}(\sigma)) \in I_k(\sigma)$ whose length $l^{max} = \sum_{p=1}^m n_p^{max}$ is maximal and whose elements $i_{k^{max}_1}(\sigma),i_{k^{max}_2}(\sigma),\hdots,i_{k^{max}_m}(\sigma)$ also are maximal (as integers). Then,
\begin{itemize}
\item if $I_k(\sigma) \neq \emptyset$, we set $c^0_k(\sigma) = l^{max}$ and $$I_{k-1}(\sigma) = I_k(\sigma) \backslash \left( \cup_{p=1}^m \{(i_{k^{max}_i}(\sigma),j_{k^{max}_i}(\sigma)),i\in [n_p^{max}]\} \right);$$
\item else we set $c_k^0(\sigma) = 0$ and $I_{k-1}(\sigma) = I_k(\sigma)$.
\end{itemize}
\end{algo}

\begin{ex}
Consider the permutation $\sigma = 549321867 \in \Sig_9$, with $\DES_2(\sigma) = \{3,7\}$ and $I_2(\sigma) = \INV_2(\sigma) = \{(1,2),(2,4),(3,7),(4,5),(5,6),(7,9)\}$. In Figure \ref{fig:FIG1} are depicted the $\des_2(\sigma)+1 = 3$ steps $k \in \{2,1,0\}$ (at each step, the 2-inversions of the maximal sequence are drawed in red then erased at the following step) : 
\begin{figure}[!h]
\centering
\includegraphics[width=7cm]{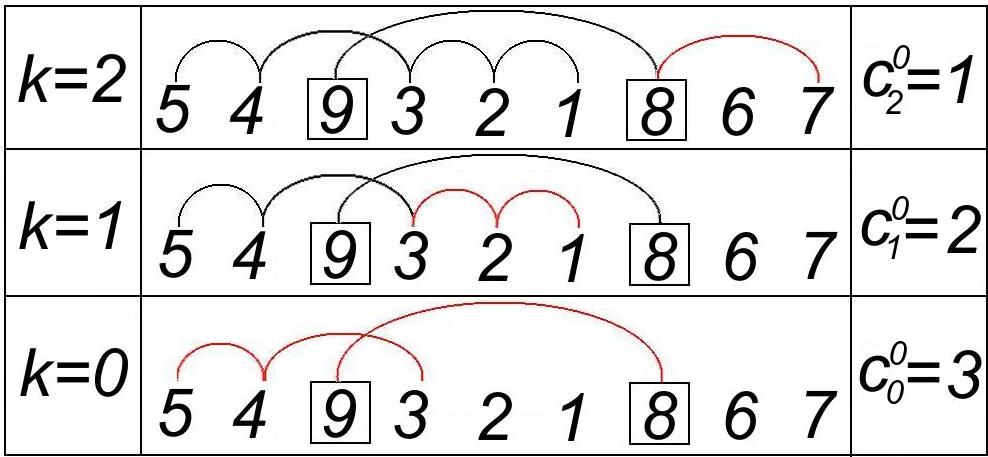}
\caption{Computation of $(c^0_k(\sigma))_{k \in [0,\des_2(\sigma)]}$ for $\sigma = 549321867 \in \Sig_9$.}
\label{fig:FIG1}
\end{figure}
\begin{itemize}
\item $k=2$ : there is only one legit sequence $(i_{k_1}(\sigma)) = (7)$, whose length is $l = n_1 = 1$. We set $c^0_2(\sigma) = 1$ and $I_1(\sigma) = I_2(\sigma) \backslash \{(7,9)\}$.
\item $k=1$ : there are three legit sequences $(i_{k_1}(\sigma)) = (3)$ (whose length is $l = n_1 =1$) then $(i_{k_1}(\sigma)) = (4)$ (whose length is $l=n_1 =2$) and $(i_{k_1}(\sigma)) = (5)$ (whose length is $l =n_1=1$). The maximal sequence is the second one, hence we set $c^0_1(\sigma) = 2$ and $I_0(\sigma) = I_1(\sigma) \backslash \{(4,5),(5,6)\}$.
\item $k=0$ : there are three legit sequences $(i_{k_1}(\sigma),i_{k_2}(\sigma)) = (1,3)$ (whose length is $l = n_1 + n_2 = 2+1=3$) then $(i_{k_1}(\sigma),i_{k_2}(\sigma)) = (2,3)$ (whose length is $l =n_1+n_2 = 1+1=2$) and $(i_{k_1}(\sigma)) = (3)$ (whose length is $l =n_1 = 1$). The maximal sequence is the first one, hence we set $c^0_0(\sigma) = 3$ and $I_{-1}(\sigma) = I_0(\sigma) \backslash \{(1,2),(2,4),(3,7)\} = \emptyset$.
\end{itemize}
\end{ex}

\begin{lem} \label{lem:lemme1}
The sum $\sum_k c^0_k(\sigma)$ equals  $\inv_2(\sigma)$ (\textit{i.e.} $I_{-1}(\sigma) = \emptyset$) and, for all $k \in [0,r] = [0,\des_2(\sigma)]$, we have $c^0_k(\sigma) \leq d_2^{k+1}(\sigma) - d_2^k(\sigma)$ with equality only if $c^0_{k+1}(\sigma) >~0$ (where $c^0_{r+1}(\sigma)$ is defined as $0$).
\end{lem}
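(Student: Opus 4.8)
The plan is to establish the two assertions separately, both by analyzing the structure of Algorithm 1.6. First I would prove the inequality $c^0_k(\sigma) \leq d_2^{k+1}(\sigma) - d_2^k(\sigma)$ for each $k \in [0,r]$. The key observation is that the length $l$ of any legit sequence $(i_{k_1},\ldots,i_{k_m})$ counted at step $k$ is a sum $\sum_p n_p$ of blocks of consecutive 2-inversions, and I claim each such maximal chain of consecutive 2-inversions starting at some $i_{k_p} \geq t_k(\sigma)$ must have all of its \emph{beginnings} lying in the interval $[t_k(\sigma), d_2^{k+1}(\sigma))$. Indeed, a 2-inversion $(i,j)$ with $\sigma(i) = \sigma(j)+1$ and $i$ lying strictly between two consecutive 2-descents forces a short "descent-free" stretch; more precisely, I would show by a counting/telescoping argument that the number of integers in $]t_k(\sigma), d_2^{k+1}(\sigma)]$ that can serve as a beginning of a 2-inversion still present in $I_k(\sigma)$, together with the way consecutive chains are threaded through the $j$-values, is bounded by $d_2^{k+1}(\sigma) - t_k(\sigma)$. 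Because $t_k(\sigma) = d_2^k(\sigma) - (k - l_0)$ for the appropriate $l_0$ and the 2-descents $d_2^{l_0},\ldots,d_2^k$ are consecutive integers, the telescoping collapses and yields the bound with $d_2^k(\sigma)$ in place of $t_k(\sigma)$, i.e. $c^0_k(\sigma) \leq d_2^{k+1}(\sigma) - d_2^k(\sigma)$.

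Next I would handle the equality case: if $c^0_k(\sigma) = d_2^{k+1}(\sigma) - d_2^k(\sigma)$, then the maximal chain just described must "fill up" the whole block of integers between $d_2^k(\sigma)$ and $d_2^{k+1}(\sigma)$, and in particular a chain of consecutive 2-inversions must reach all the way to position $d_2^{k+1}(\sigma)$, i.e. there is some 2-inversion $(d_2^{k+1}(\sigma), j) \in I_k(\sigma)$ that is either used at step $k$ or is available to be used at step $k+1$ — unwinding the algorithm's order (we process $k = r, r-1, \ldots, 0$, so step $k+1$ happens \emph{before} step $k$), the presence of such a 2-inversion beginning at $d_2^{k+1}(\sigma)$ forces $I_{k+1}(\sigma)$ to have contained a legit sequence at step $k+1$ with positive length, hence $c^0_{k+1}(\sigma) > 0$. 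I would also note the boundary convention $c^0_{r+1}(\sigma) = 0$ is consistent because $(n, j) \notin \INV_2(\sigma)$, so the equality $c^0_r(\sigma) = d_2^{r+1}(\sigma) - d_2^r(\sigma) = n - d_2^r(\sigma)$ cannot hold.

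For the first assertion, $\sum_k c^0_k(\sigma) = \inv_2(\sigma)$, equivalently $I_{-1}(\sigma) = \emptyset$, I would argue that every 2-inversion of $\sigma$ eventually gets removed. The point is that a 2-inversion $(i,j) \in \INV_2(\sigma)$ survives in $I_k(\sigma)$ only if it was never selected as part of a maximal chain at any step $> k$; I would show that for $k$ small enough — concretely, taking $k$ to be the largest index with $d_2^k(\sigma) \leq i$ (using $d_2^0(\sigma) = 0$) — the single 2-inversion $(i,j)$, or the consecutive chain it belongs to, is itself a legit sequence at step $k$ (condition 2 reads $t_k(\sigma) \leq i$, which holds since $t_k(\sigma) \leq d_2^k(\sigma) \leq i$). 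Hence $I_k(\sigma)$ being nonempty always produces a nonzero $c^0_k$ that strictly decreases the count of remaining 2-inversions, and since we run $k$ all the way down to $0$, nothing can be left: $I_{-1}(\sigma) = \emptyset$.

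The main obstacle I anticipate is the equality-case half of the second assertion: making rigorous the claim that saturation of the bound at level $k$ forces a usable 2-inversion beginning exactly at $d_2^{k+1}(\sigma)$, and then tracing through the algorithm's decreasing order of $k$ to conclude $c^0_{k+1}(\sigma) > 0$. This requires a careful bookkeeping of which 2-inversions are removed at steps $r, r-1, \ldots, k+1$ versus which remain in $I_k(\sigma)$, and of how the "maximal length, then maximal elements" tie-breaking rule in Algorithm 1.6 interacts with the interval structure imposed by the 2-descents. The inequality itself and the surjectivity statement $I_{-1}(\sigma) = \emptyset$ should be comparatively routine telescoping and counting arguments once the right interval decomposition is set up.
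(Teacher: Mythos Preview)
Your plan has two genuine gaps that the paper's argument closes with a single idea you are missing.

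First, for the inequality $c^0_k(\sigma)\le d_2^{k+1}(\sigma)-d_2^k(\sigma)$: your claim that the beginnings of the chains lie in $[t_k(\sigma),d_2^{k+1}(\sigma))$ only yields the bound $d_2^{k+1}(\sigma)-t_k(\sigma)$, and your ``telescoping collapses'' to replace $t_k$ by $d_2^k$ is not a real argument. The paper's key observation is that on the interval $[t_k(\sigma),d_2^k(\sigma)+1]$ the values $\sigma(t_k(\sigma))>\sigma(t_k(\sigma)+1)>\cdots>\sigma(d_2^k(\sigma)+1)$ are \emph{strictly decreasing} (these positions are consecutive 2-descents), while a legit sequence requires $\sigma(i_{k_1})<\sigma(i_{k_2})<\cdots$. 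Hence at most \emph{one} of the $i_{k_p}$ can lie in $[t_k(\sigma),d_2^k(\sigma)+1]$; all the others, and all the intermediate beginnings in the chains, must lie in $(d_2^k(\sigma)+1,d_2^{k+1}(\sigma)]$. That is what gives the correct bound.

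Second, your argument for $I_{-1}(\sigma)=\emptyset$ does not work. You show that if $(i,j)\in I_k(\sigma)$ with $k$ the largest index satisfying $d_2^k(\sigma)\le i$, then $(i)$ is a legit sequence at step $k$, hence $c^0_k(\sigma)>0$. But this only says \emph{some} 2-inversions are removed at step $k$, not that $(i,j)$ itself is. Your sentence ``$I_k(\sigma)$ being nonempty always produces a nonzero $c^0_k$ that strictly decreases the count $\ldots$ nothing can be left'' is a non sequitur: removing at least one element at each of $r+1$ steps does not empty a set of $\inv_2(\sigma)$ elements. The paper instead proves the stronger inductive statement that $I_{k-1}(\sigma)$ contains \emph{no} 2-inversion $(i,j)$ with $i>d_2^k(\sigma)$; this uses that between consecutive 2-descents the values are increasing enough that \emph{every} such $i$ can be inserted into the maximal sequence, so all of them are removed at step $k$. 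Running this induction down to $k=0$ gives $I_{-1}(\sigma)=\emptyset$ directly.

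Structurally, the paper does both parts in a single downward induction on $k$ with the strengthened hypothesis above; trying to separate the two assertions, as you do, loses exactly the information (the localization of $I_k(\sigma)$ to beginnings $\le d_2^{k+1}(\sigma)$) needed to make either half go through.
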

\begin{proof}
With precision, we show by induction that, for all $k \in \{\des_2(\sigma),\hdots,1,0\}$, the set $I_{k-1}(\sigma)$ contains no 2-inversion $(i,j)$ such that $d_2^k(\sigma) < i$. For $k=0$, it will mean $I_{-1}(\sigma) = \emptyset$ (recall that $d_2^0(\sigma)$ has been defined as $0$).

$\star$ If $k = \des_2(\sigma) = r$, the goal is to prove that $c^0_r(\sigma) < n - d_2^r(\sigma)$. Suppose there exists a sequence $(i_{k_1}(\sigma),i_{k_2}(\sigma), \hdots, i_{k_m}(\sigma))$ of length $c^0_{r}(\sigma) \geq n-d_2^{r}(\sigma)$ with $t_{r}(\sigma) \leq i_{k_1}(\sigma) < i_{k_2}(\sigma) < \hdots < i_{k_m}(\sigma)$. In particular, there exist $c^0_{r}(\sigma)~\geq~n-d_2^{r}(\sigma)$ 2-inversions $(i,j)$ such that $d_2^{r}(\sigma) < j$, which forces $c^0_{r}(\sigma)$ to equal $n-d_2^{r}(\sigma)$ and every $j > d_2^{r}(\sigma)$ to be the arrival of a 2-inversion $(i,j)$ such that $t_{r}(\sigma) \leq i$. In particular, this is true for $j = d_2^{r}(\sigma)+1$, which is absurd because $\sigma(i) \geq \sigma \left( d_2^{r}(\sigma) \right) > \sigma \left( d_2^{r}(\sigma) + 1 \right) +1$ for all $i \in [t_{r}(\sigma),d_2^{r}(\sigma)]$. Therefore $c^0_{r}(\sigma) < n-d_2^{r}(\sigma)$. Also, it is easy to see that every $i > d_2^{r}(\sigma)$ that is the beginning of a 2-inversion $(i,j)$ necessarily appears in the maximal sequence $\left( i_{k^{max}_1}(\sigma),i_{k^{max}_2}(\sigma), \hdots, i_{k^{max}_m}(\sigma) \right)$ whose length defines $c^0_{r}(\sigma)$, hence $(i,j) \not\in I_{r-1}(\sigma)$.

$\star$ Now, suppose that $c^{0}_{k}(\sigma) \leq d_2^{k+1}(\sigma) - d_2^k(\sigma)$ for some $k \in [\des_2(\sigma)]$ with equality only if $c^0_{k+1}(\sigma) > 0$, and that no 2-inversion $(i,j)$ with $d_2^{k}(\sigma) < i$ belongs to $I_{k-1}(\sigma)$.

If $t_{k-1}(\sigma) = t_k(\sigma)$ (\textit{i.e.}, if $d_2^{k-1}(\sigma) = d_2^k(\sigma) - 1$), since $I_{k-1}(\sigma)$ does not contain any 2-inversion $(i,j)$ with $d_2^{k}(\sigma) <i$, then $c^0_{k-1}(\sigma) \leq 1 = d_2^k(\sigma) - d_2^{k-1}(\sigma)$. Moreover, if $c^0_{k-1}(\sigma) = 1$, then there exists a 2-inversion $(i,j) \in I_{k-1}(\sigma) \subset I_k(\sigma)$ such that $i \in [t_{k-1}(\sigma),d_2^k(\sigma)]$. Consequently $(i)$ was a legit sequence for the computation of $c^0_k(\sigma)$ at the previous step (because $t_k(\sigma) = t_{k-1}(\sigma)$), which implies $c^0_k(\sigma)$ equals at least the length of $(i)$. In particular $c^0_k(\sigma) > 0$.

Else, consider a sequence $(i_{k_1}(\sigma),i_{k_2}(\sigma), \hdots, i_{k_m}(\sigma))$ that fits the three conditions of Algorithm \ref{algo:suiteck0} at the step $k-1$. In particular $t_{k-1}(\sigma) \leq i_{k_1}(\sigma)$. Also $i_{k_m}(\sigma) \leq d_2^k(\sigma)$ by hypothesis. Since $\sigma(i_{k_p}(\sigma)) < \sigma(i_{k_{p+1}}(\sigma))$ for all $p$, and since $\sigma(t_{k-1}(\sigma)) > \sigma(t_{k-1}(\sigma)+1) > \hdots > \sigma \left( d_2^{k-1}(\sigma) \right) > \sigma \left( d_2^{k-1}(\sigma)+1 \right)$, then only one element of the set $[t_{k-1}(\sigma),d_2^{k-1}(\sigma)+1]$ may equal $i_{k_p}(\sigma)$ for some $p \in [m]$. Thus, the length $l$ of the sequence verifies $l \leq d_2^k(\sigma)-d_2^{k-1}(\sigma)$, with equality only if $i_{k_m}(\sigma) = d_2^k(\sigma)$ (which implies $c^0_k(\sigma) > 0$ as in the previous paragraph). In particular, this is true for $l = c^0_{k-1}(\sigma)$.

Finally, as for $k = \des_2(\sigma)$, every $i \in [d_2^{k-1}(\sigma)+1,d_2^k(\sigma)]$ that is the beginning of a 2-inversion $(i,j)$ necessarily appears in the maximal sequence $\left( i_{k^{max}_1}(\sigma),i_{k^{max}_2}(\sigma), \hdots, i_{k^{max}_m}(\sigma) \right)$ whose length defines $c^0_{k-1}(\sigma)$, hence $(i,j) \not\in I_{k-2}(\sigma)$.

So the lemma is true by induction.
\end{proof}

\begin{defi}
\label{def:gzerosigma}
We define a graph $\mathcal{G}^0(\sigma)$ made of circles and dots organised in ascending or descending slopes, by plotting :
\begin{itemize}
\item for all $k \in [0,r]$, an ascending slope of $c_k^0(\sigma)$ circles such that the first circle has abscissa $d_2^k(\sigma) + 1$ and the last circle has abscissa $d_2^k(\sigma) + c_k^0(\sigma)$ (if $c_k^0(\sigma) = 0$, we plot nothing). All the abscissas are distinct because
$$d_2^0(\sigma) + c_0 < d_2^1(\sigma) + c_1 < \hdots < d_2^r(\sigma) + c_r$$ in view of Lemma \ref{lem:lemme1};
\item dots at the remaining $n-s = n-\inv_2(\sigma)$ abscissas from $1$ to $n$, in ascending and descending slopes with respect to the descents and ascents of the word $\omega(\sigma)$ defined by
\begin{equation}
\label{eq:defomega}
\omega(\sigma) = \sigma(u_1(\sigma)) \sigma(u_2(\sigma)) \hdots \sigma(u_{n-s}(\sigma))
\end{equation} 
where $$\{u_1(\sigma) < u_2(\sigma) < \hdots < u_{n-s}(\sigma) \} := \Sig_n \backslash \{i_1(\sigma) < i_2(\sigma) < \hdots < i_{s}(\sigma)\}.$$
\end{itemize}
\end{defi}

\begin{ex}
The permutation $\sigma_0 = 425736981 \in \Sig_9$ (with $\DES_2(\sigma_0) = \{1,4,8\}$ and $\INV_2(\sigma_0) = \{(1,5),(2,9),(4,6),(7,8)\}$), which yields the sequence $(c^0_k(\sigma_0))_{k \in [0,3]} = (1,1,2,0)$ (see Figure \ref{fig:FIG2} where all the 2-inversions involved in the computation of a same $c^0_k(\sigma_0)$ are drawed in a same color) and the word $\omega(\sigma_0) = 53681$, provides the unlabelled graph $\mathcal{G}^0(\sigma_0)$ depicted in Figure \ref{fig:FIG4}.

\begin{figure}[!h]
\centering
\includegraphics[width=7cm]{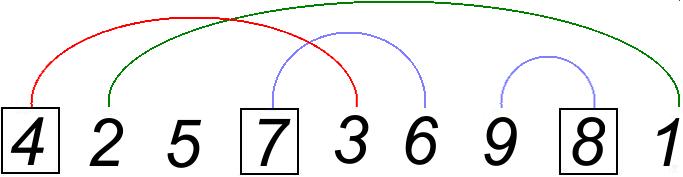}
\caption{$(c^0_k(\sigma_0))_{k \in [0,3]} = (\textcolor{red}{1},\textcolor{green}{1},\textcolor{blue}{2},0).$}
\label{fig:FIG2}
\end{figure}

\begin{figure}[!h]
  \centering
\includegraphics[width=4cm]{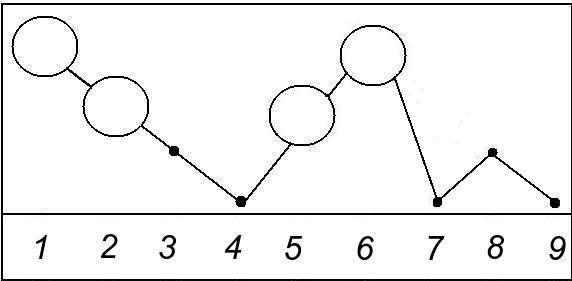}
\caption{Graph $\mathcal{G}^0(\sigma_0)$.}
\label{fig:FIG4}
\end{figure}
\end{ex}

The following lemma is easy.

\begin{lem}
\label{lem:modifsequence} For all $i \in [n]$, if the $i$-th vertex (from left to right) $v_i^0(\sigma)$ of $\mathcal{G}^0(\sigma)$ is a dot and if $i$ is a \textit{descent} of $\mathcal{G}^0(\sigma)$ (i.e., if $v_i^{0}(\sigma)$ and $v_{i+1}^{0}(\sigma)$ are two dots in a same descending slope) whereas $i \not\in \DES_2(\sigma)$, let $k_i$ such that 
$$d_2^{k_i}(\sigma)~+~c^{0}_{k_i}(\sigma)~<~i <~d_2^{k_i+1}(\sigma)$$
and let $p \in [n-s]$ such that $v_i^0(\sigma)$ is the $p$-th dot (from left to right) of $\mathcal{G}^{0}(\sigma)$. Then :
\begin{enumerate}
\item $u_p(\sigma)$ is the greatest integer $u < d_2^{k_i+1}(\sigma)$ that is not the beginning of a 2-inversion of $\sigma$;
\item $u_{p+1}(\sigma)$ is the smallest integer $u > d_2^{k_i+1}(\sigma)$ that is not a 2-descent or the beginning of a 2-inversion of $\sigma$;
\item $c^{0}_k(\sigma) >0$ for all $k$ such that $d_2^{k_i+1}(\sigma) \leq d_2^k(\sigma) \leq u_{p+1}(\sigma)$.
\end{enumerate}
In particular $c^{0}_{k_i+1}(\sigma) > 0$.
\end{lem}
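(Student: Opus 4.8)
The plan is to unwind the construction of $\mathcal{G}^0(\sigma)$ near the abscissa $i$ and confront it with the structure of the $2$-descents and $2$-inversions of $\sigma$ recorded in Lemma \ref{lem:lemme1}. First I would fix the local picture. Since $v_i^0(\sigma)$ is a dot and $i\notin\DES_2(\sigma)$, the abscissa $i$ is neither a circle of $\mathcal{G}^0(\sigma)$ nor one of the $2$-descents $d_2^1(\sigma),\dots,d_2^r(\sigma)$, hence it lies strictly between the right end $d_2^{k_i}(\sigma)+c^0_{k_i}(\sigma)$ of the $k_i$-th run of circles and the following $2$-descent $d_2^{k_i+1}(\sigma)$; this makes $k_i$ unambiguous and yields $c^0_{k_i}(\sigma)<d_2^{k_i+1}(\sigma)-d_2^{k_i}(\sigma)$. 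By Lemma \ref{lem:lemme1} the only circle of $\mathcal{G}^0(\sigma)$ that could have abscissa in $]d_2^{k_i}(\sigma)+c^0_{k_i}(\sigma),d_2^{k_i+1}(\sigma)]$ would be a last circle of the $k_i$-th run landing on $d_2^{k_i+1}(\sigma)$, which the strict inequality forbids; so the abscissas $d_2^{k_i}(\sigma)+c^0_{k_i}(\sigma)+1,\dots,d_2^{k_i+1}(\sigma)$ all carry dots, forming one contiguous run of dots containing $v_i^0(\sigma)$ and $v_{i+1}^0(\sigma)$, in particular $i+1\leq d_2^{k_i+1}(\sigma)$. Counting the circles of abscissa $\leq i$ — exactly those of the runs $0,1,\dots,k_i$ — gives $p=i-\sum_{k=0}^{k_i}c^0_k(\sigma)$, and more generally the dot at abscissa $i+j$ is the $(p+j)$-th dot for each $j\in[0,d_2^{k_i+1}(\sigma)-i]$.

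Next I would identify $u_p(\sigma)$ and $u_{p+1}(\sigma)$, recalling that the $l$-th dot carries the value $\sigma(u_l(\sigma))$ and that $u_l(\sigma)$ is the $l$-th integer that is not the beginning of a $2$-inversion. The preliminary observation is that a descent of $\mathcal{G}^0(\sigma)$ between two dots at positions $l,l+1$ with $u_{l+1}(\sigma)=u_l(\sigma)+1$ forces $u_l(\sigma)\in\DES_2(\sigma)$: indeed $\sigma(u_l(\sigma))>\sigma(u_l(\sigma)+1)$, and since $u_l(\sigma)$ is not a beginning one cannot have $\sigma(u_l(\sigma)+1)=\sigma(u_l(\sigma))-1$, so the drop is by at least $2$. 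Combining this with the monotonicity of $\sigma$ along the consecutive $2$-descents $t_{k_i+1}(\sigma),\dots,d_2^{k_i+1}(\sigma)$ (each drop of size at least $2$) and with the maximality built into Algorithm \ref{algo:suiteck0}, I would show that the run of dots at abscissas $d_2^{k_i}(\sigma)+c^0_{k_i}(\sigma)+1,\dots,d_2^{k_i+1}(\sigma)$ exhibits exactly one descent of $\mathcal{G}^0(\sigma)$ between two dots, occurring precisely when the left dot's $\omega$-position is the greatest non-beginning strictly below $d_2^{k_i+1}(\sigma)$ and the right dot's $\omega$-position is the next non-beginning. Since $d_2^{k_i+1}(\sigma)$ and every $2$-descent lying between it and that next non-beginning must themselves be beginnings of $2$-inversions — otherwise, by Lemma \ref{lem:lemme1}, the associated $2$-inversion of $\sigma$ would survive past the algorithm step at which it is necessarily erased — that next non-beginning is the smallest integer exceeding $d_2^{k_i+1}(\sigma)$ which is neither a $2$-descent nor a beginning. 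Matching this unique descent with our $i$ gives statements $1$ and $2$. I expect this matching to be the main obstacle, since it is exactly where the maximality of Algorithm \ref{algo:suiteck0} has to be converted into the structural statement about $\sigma$.

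Finally, statement $3$ follows from $1$ and $2$: a $2$-descent $d_2^k(\sigma)$ with $d_2^{k_i+1}(\sigma)\leq d_2^k(\sigma)\leq u_{p+1}(\sigma)$ lies strictly between $u_p(\sigma)$ and $u_{p+1}(\sigma)$ (since $u_p(\sigma)<d_2^{k_i+1}(\sigma)$ and $u_{p+1}(\sigma)$ is not a $2$-descent), hence is the beginning of a $2$-inversion $(d_2^k(\sigma),j)$; reading the induction in the proof of Lemma \ref{lem:lemme1} at its step $k$ — where every still-surviving beginning in $]d_2^{k-1}(\sigma),d_2^k(\sigma)]$ is absorbed into the maximal sequence defining $c^0_k(\sigma)$ — and checking that $(d_2^k(\sigma),j)$ does survive until that step, one obtains $c^0_k(\sigma)>0$; the special case $k=k_i+1$ is the last assertion. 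Along the way I would also dispose of the degenerate configurations, namely $k_i=0$, a run of dots reduced to a single vertex, and the fact that under the hypotheses $d_2^{k_i+1}(\sigma)\neq n$ so that $u_{p+1}(\sigma)$ genuinely exists; these are all routine.
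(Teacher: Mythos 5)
The paper offers no argument for this lemma (it is simply declared easy), so your proposal has to stand on its own, and as written it does not: the heart of the lemma is never proved. Your first paragraph is fine and gives the correct normalization: the abscissas $d_2^{k_i}(\sigma)+c^0_{k_i}(\sigma)+1,\dots,d_2^{k_i+1}(\sigma)$ all carry dots and $p=i-\sum_{k=0}^{k_i}c^0_k(\sigma)$. But from that point on, statement 1 is exactly the counting identity that the number of non-beginnings in $[1,d_2^{k_i+1}(\sigma)-1]$ equals this $p$, equivalently that the number of beginnings of $2$-inversions in $[1,d_2^{k_i+1}(\sigma)]$ equals $\sum_{k=0}^{k_i}c^0_k(\sigma)+(d_2^{k_i+1}(\sigma)-i)$; this is where the maximality in Algorithm \ref{algo:suiteck0} and the hypothesis that $i$ is a descent of $\mathcal{G}^0(\sigma)$ with $i\notin\DES_2(\sigma)$ must actually be used. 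Your text only announces it (``I would show that the run of dots \dots exhibits exactly one descent \dots occurring precisely when \dots'') and you yourself flag this matching as ``the main obstacle''; nothing in the sketch ties the circle data $(c^0_k(\sigma))_k$, which determines $p$ from $i$, to the positions of the beginnings, which determine $u_p(\sigma)$. Your preliminary observation does not bridge this: it concerns the case $u_{p+1}(\sigma)=u_p(\sigma)+1$ and concludes $u_p(\sigma)\in\DES_2(\sigma)$, which contradicts nothing, since the hypothesis is $i\notin\DES_2(\sigma)$ and you have not shown $u_p(\sigma)=i$. (A statement that would genuinely help, and that your observation should be strengthened to, is: if $\sigma(u_p(\sigma))>\sigma(u_{p+1}(\sigma))$ and $u_p(\sigma)\notin\DES_2(\sigma)$, then some $2$-descent lies in $[u_p(\sigma),u_{p+1}(\sigma)[$, because all interior positions are beginnings and any descent at a non-$2$-descent beginning drops by exactly $1$; but identifying that $2$-descent with $d_2^{k_i+1}(\sigma)$ is precisely the missing counting argument.)

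Two further steps are asserted rather than proved. First, in statement 3 you reduce to ``checking that $(d_2^k(\sigma),j)$ does survive until step $k$'' of Algorithm \ref{algo:suiteck0}; this is not automatic, since when several $2$-descents are consecutive integers one has $t_{k'}(\sigma)\leq d_2^k(\sigma)$ for some $k'>k$, and the $2$-inversion beginning at $d_2^k(\sigma)$ may be consumed at that earlier step $k'$, so this case has to be analysed (or the conclusion $c^0_k(\sigma)>0$ obtained another way). Second, the sentence arguing that $d_2^{k_i+1}(\sigma)$ and the intermediate $2$-descents ``must themselves be beginnings --- otherwise the associated $2$-inversion would survive past the step at which it is necessarily erased'' does not parse: if such a vertex were not a beginning there would be no associated $2$-inversion, and in any case, once statement 1 is known, everything strictly between $u_p(\sigma)$ and $u_{p+1}(\sigma)$ is a beginning by definition of the $u_l(\sigma)$'s, so what actually needs proof there is $u_{p+1}(\sigma)>d_2^{k_i+1}(\sigma)$, $u_{p+1}(\sigma)\notin\DES_2(\sigma)$, and (as you note but defer) that $d_2^{k_i+1}(\sigma)\neq n$ under the hypotheses. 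As it stands the proposal establishes the easy geometric bookkeeping but leaves the lemma's actual content unproved.
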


Lemma \ref{lem:modifsequence} motivates the following definition.

\begin{defi}
\label{def:ckgsigma}
For $i$ from $1$ to $n-1$, let $k_i \in [0,r]$ such that
$$d_2^{k_i}(\sigma)~+~c^{0}_{k_i}(\sigma)~<~i <~d_2^{k_i+1}(\sigma).$$
If $i$ fits the conditions of Lemma \ref{lem:modifsequence}, then we define a sequence $(c^i_k(\sigma))_{k \in [0,r]}$ by
\begin{align*}
c^i_{k_i}(\sigma) &= c^{i-1}_{k_i}(\sigma)+1,\\ c^i_{k_i+1}(\sigma) &= c^{i-1}_{k_i+1}(\sigma)-1,\\ c^i_k(\sigma) &= c^{i-1}_k(\sigma)  \text{ for all $k \not\in \{k_i,k_i+1\}$.}
\end{align*}
Else, we define $(c^i_k(\sigma))_{k \in [0,r]}$ as $(c^{i-1}_k(\sigma))_{k \in [0,r]}$.

The final sequence $(c^n_k(\sigma))_{k \in [0,r]}$ is denoted by
$$(c_k(\sigma))_{k \in [0,r]}.$$
\end{defi}

By construction, and from Lemma \ref{lem:lemme1}, the sequence $(c_k(\sigma))_{k \in [0,r]}$ has the same properties as $(c^0_k(\sigma))_{k \in [0,r]}$ detailed in Lemma \ref{lem:lemme1}.

Consequently, we may define an unlabelled graph $$\mathcal{G}(\sigma)$$
by replacing $(c^0_k(\sigma))_{k \in [0,r]}$ with $(c_k(\sigma))_{k \in [0,r]}$ in Definition \ref{def:gzerosigma}.

\begin{ex}
In the graph $\mathcal{G}^0(\sigma)$ depicted in Figure \ref{fig:FIG4} where $\sigma_0 = 425736981 \in \Sig_9$, we can see that the dot $v_3^0(\sigma_0)$ is a descent whereas $3 \not\in \DES_2(\sigma_0)$, hence, from the sequence $(c^0_k(\sigma_0))_{k \in [0,3]} = (1,1,2,0)$, we compute $(c_k(\sigma_0))_{k \in [0,3]} = (1,2,1,0)$ and we obtain the graph 
$\mathcal{G}(\sigma_0)$
depicted in Figure \ref{fig:FIG5}.
\begin{figure}[!h]
  \centering
\includegraphics[width=4cm]{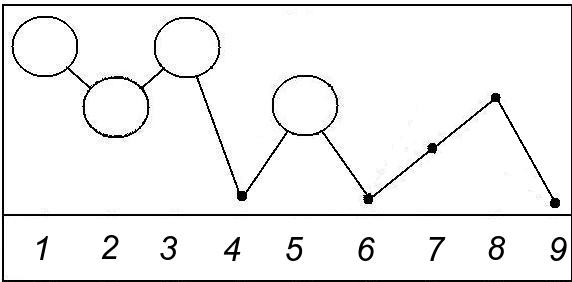}
\caption{Graph $\mathcal{G}(\sigma_0)$.}
\label{fig:FIG5}
\end{figure}
\end{ex}

Let $v_1(\sigma),v_2(\sigma),\hdots,v_n(\sigma)$ be the $n$ vertices of $\mathcal{G}(\sigma)$ from left to right.

By construction, the descents of the unlabelled graph $\mathcal{G}(\sigma)$ (\textit{i.e.}, the integers $i \in [n-1]$ such that $v_i(\sigma)$ and $v_{i+1}(\sigma)$ are in a same descending slope) are the integers
$$d^k(\sigma) = d_2^k(\sigma) + c_k(\sigma)$$
for all $k \in [0,r]$.

\subsection{Labelling of the graph $\mathcal{G}(\sigma)$}
\label{subsec:labelling}

\subsubsection{Labelling of the circles}
We intend to label the circles of $\mathcal{G}(\sigma)$ with the integers $$j_1(\sigma),j_2(\sigma),\hdots,j_{s}(\sigma).$$

\begin{algo}
\label{algo:labelcircles}
For all $i \in [n]$, if the vertex $v_i(\sigma)$ is a circle (hence $i < n$), we label it first with the set 
$$[i+1,n] \cap \{j_1(\sigma),j_2(\sigma),\hdots,j_{s}(\sigma)\}.$$ 

Afterwards, if a circle $v_i(\sigma)$ is found in a descending slope such that there exists a quantity of $a$ circles above $v_i(\sigma)$, and in an ascending slope such that there exists a quantity of $b$ circles above $v_i(\sigma)$, then we remove the $a+b$ greatest integers from the current label of $v_i(\sigma)$ (this set necessarily had at least $a+b+1$ elements) and the smallest integer from every of the $a+b$ labels of the $a+b$ circles above $v_i(\sigma)$ in the two related slopes. At the end of this step, if an integer $j_k(\sigma)$ appears in only one label of a circle $v_i(\sigma)$, then we replace the label of $v_i(\sigma)$ with $j_k(\sigma)$.

Finally, we replace every label that is still a set by the unique integer it may contain with respect to the order of the elements in the sequence $$(j_1(\sigma),j_2(\sigma), \hdots,j_{s}(\sigma))$$ (from left to right).
\end{algo}

\begin{ex}
For $\sigma_0 = 425736981$ (see Figure \ref{fig:FIG2}) whose graph $\mathcal{G}(\sigma_0)$ is depicted in Figure \ref{fig:FIG5}, we have $s = \inv_2(\sigma) = 4$ and $\{j_1(\sigma_0),j_2(\sigma_0),j_3(\sigma_0),j_4(\sigma_0)\} = \{5,6,8,9\}$, which provides first the graph labelled by sets depicted in Figure \ref{fig:FIG6}. Afterwards, since the circle $v_2(\sigma_0)$ is in a descending slope with $a=1$ circle above it (the vertex $v_1(\sigma_0)$) and in an ascending slope with also $b=1$ circle above it (the vertex $v_3(\sigma_0)$), then we remove the $a+b=2$ integers $8$ and $9$ from its label, which becomes $\{5,6\}$, and we remove $5$ from the labels of $v_1(\sigma_0)$ and $v_3(\sigma_0)$. Also, since the label of $v_2(\sigma_0)$ is the only set that contains $5$, then we label $v_2(\sigma_0)$ with $5$ (see Figure \ref{fig:FIG7}). Finally, the sequence $(j_1(\sigma_0),j_2(\sigma_0),j_3(\sigma_0),j_4(\sigma_0)) = (5,9,6,8)$ gives the order (from left to right) of apparition of the remaining integers $6,8,9$ (see Figure \ref{fig:FIG8}).

\begin{figure}[!h]
\begin{minipage}{.32\textwidth}
\centering
\includegraphics[width=4cm]{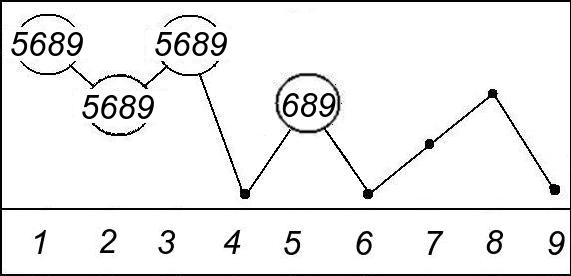}
\caption{}
\label{fig:FIG6}
\end{minipage}
\begin{minipage}{.33\textwidth}
  \centering
\includegraphics[width=4cm]{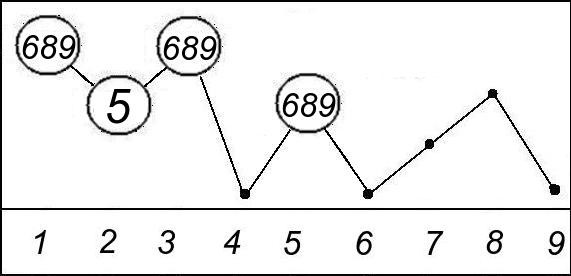}
\caption{}
\label{fig:FIG7}
\end{minipage}%
\begin{minipage}{.33\textwidth}
  \centering
\includegraphics[width=4cm]{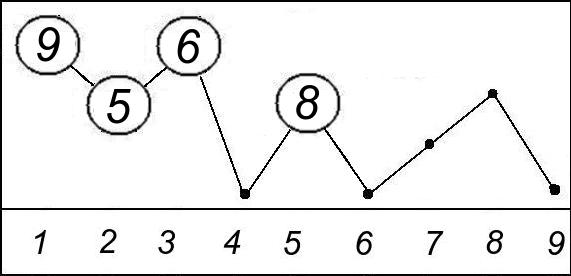}
\caption{}
\label{fig:FIG8}
\end{minipage}
\end{figure}
\end{ex}

\subsubsection{Labelling of the dots}

Let
$$\{p_1(\sigma) < p_2(\sigma) < \hdots < p_{n-s}(\sigma)\} = [n] \backslash \bigsqcup_{k=0}^r ]d_2^k(\sigma),d^k(\sigma)].$$

We intend to label the dots $\{v_{p_i(\sigma)}(\sigma), i \in [n-s]\}$ of $\mathcal{G}(\sigma)$ with the elements of 
$$\{1 = e_1(\sigma) < e_2(\sigma) < \hdots < e_{n-s}(\sigma) \} = [n] \backslash \{j_1(\sigma),j_2(\sigma),\hdots,j_{s}(\sigma)\}.$$

\begin{algo}
\label{algo:labeldots}
\begin{enumerate}
\item For all $k \in [n-s]$, we label first the dot $v_{p_k(\sigma)}(\sigma)$ with the set $$[\min(p_k(\sigma),u_k(\sigma))] \cap ([n] \backslash \{j_1(\sigma),j_2(\sigma),\hdots,j_{s}(\sigma)\})$$ where $u_1(\sigma),u_2(\sigma),\hdots,u_{n-s}(\sigma)$ are the integers introduced in $(\ref{eq:defomega})$. 
\item Afterwards, similarly as for the labelling of the circles, if a dot $v_i(\sigma)$ is found in a descending slope such that $a$ dots are above $v_i(\sigma)$, and in an ascending slope such that $b$ dots are above $v_i(\sigma)$, then we remove the $a+b$ greatest integers from the current label of $v_i(\sigma)$ and the smallest integer from every of the $a+b$ labels of the dots above $v_i(\sigma)$ in the two related slopes. At the end of this step, if an integer $l$ appears in only one label of a dot $v_i(\sigma)$, then we replace the label of $v_i(\sigma)$ with $l$.
\item Finally, for $k$ from $1$ to $n-s$, let 
\begin{equation}
\label{eq:defwik}
w_1^k(\sigma) < w_2^k(\sigma) < \hdots < w_{q_k(\sigma)}^k(\sigma)
\end{equation}
such that $$\{p_{w_i^k(\sigma)}(\sigma),i\} = \left \{p_i(\sigma), \text{ $e_k(\sigma)$ appear in the label of $p_i(\sigma)$}\right \},$$ 
and let $i(k) \in [q_k(\sigma)]$ such that
$$\sigma\left(u_{w_{i(k)}^k(\sigma)}(\sigma)\right) = \min \{ \sigma\left(u_{w_i^k(\sigma)}(\sigma)\right),i \in [q_k(\sigma)]\}.$$
Then, we replace the label of the dot $p_{w_{i(k)}^k(\sigma)}(\sigma)$ with the integer $e_k(\sigma)$ and we erase $e_k(\sigma)$ from any other label (and if an integer $l$ appears in only one label of a dot $v_i(\sigma)$, then we replace the label of $v_i(\sigma)$ with $l$).
\end{enumerate}
\end{algo}

\begin{ex}
For $\sigma_0 = 425736981$ whose graph $\mathcal{G}(\sigma_0)$ has its circles labelled in Figure \ref{fig:FIG8}, the sequence $(u_1(\sigma_0),u_2(\sigma_0),u_3(\sigma_0),u_4(\sigma_0),u_5(\sigma_0)) = (3,5,6,8,9)$ provides first the graph labelled by sets depicted in Figure \ref{fig:FIG9}.
\begin{figure}[!h]
\begin{minipage}{.5\textwidth}
\centering
\includegraphics[width=5cm]{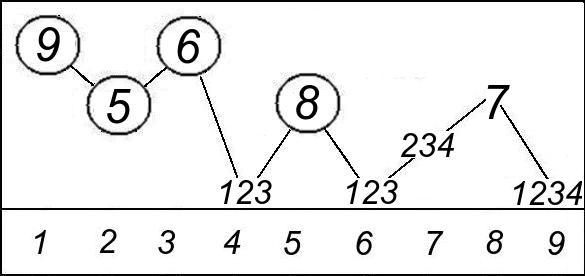}
\caption{}
\label{fig:FIG9}
\end{minipage}
\begin{minipage}{.5\textwidth}
  \centering
\includegraphics[width=5cm]{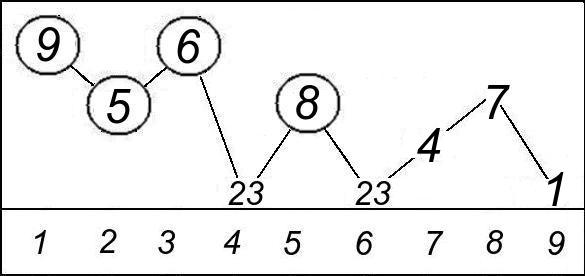}
\caption{}
\label{fig:FIG91}
\end{minipage}%
\end{figure}
\begin{figure}[!h]
\centering
\includegraphics[width=5cm]{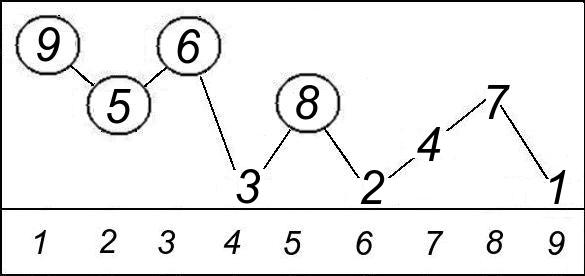}
\caption{Labelled graph $\mathcal{G}(\sigma_0)$.}
\label{fig:FIG10}
\end{figure}
The rest of the algorithm goes from $k=1$ to $n-s=9-4=5$.
\begin{itemize}
\item $k=1$ : in Figure \ref{fig:FIG9}, the integer $e_1(\sigma_0) = 1$ appears in the labels of the dots $v_{p_1(\sigma_0)}(\sigma_0) = v_4(\sigma_0)$, $v_{p_2(\sigma_0)}(\sigma_0) = v_6(\sigma_0)$ and $v_{p_5(\sigma_0)}(\sigma_0) = v_9(\sigma_0)$, so, from $$(\sigma_0(u_1(\sigma_0)),\sigma_0(u_2(\sigma_0)),\sigma_0(u_5(\sigma_0)) = (5,3,1),$$ we label the dot $v_{p_5(\sigma_0)}(\sigma_0) = v_9(\sigma_0)$ with the integer $e_1(\sigma_0) = 1$ and we erase $1$ from any other label, and since the integer $4$ now only appears in the label of the dot $v_7(\sigma_0)$, then we label $v_7(\sigma_0)$ with $4$ (see Figure \ref{fig:FIG91}). 
\item $k=2$: in Figure \ref{fig:FIG91}, the integer $e_2(\sigma_0) = 2$ appears in the labels of the dots $v_{p_1(\sigma_0)}(\sigma_0) = v_4(\sigma_0)$ and $v_{p_2(\sigma_0)}(\sigma_0) = v_6(\sigma_0)$ so, from $$(\sigma_0(u_1(\sigma_0)),\sigma_0(u_2(\sigma_0))) = (5,3),$$ we label the dot $v_{p_2(\sigma_0)}(\sigma_0) = v_6(\sigma_0)$ with the integer $e_2(\sigma_0) = 2$ and we erase $2$ from any other label, which provides the graph labelled by integers depicted in Figure \ref{fig:FIG10}.
\item The three steps $k = 3,4,5$ change nothing because every dot of $\mathcal{G}(\sigma_0)$ is already labelled by an integer at the end of the previous step.
\end{itemize}
So the final version of the labelled graph $\mathcal{G}(\sigma_0)$ is the one depicted in Figure \ref{fig:FIG10}.
\end{ex}

\subsection{Definition of $\varphi(\sigma)$}

By construction of the labelled graph $\mathcal{G}(\sigma)$, the word $y_1 y_2 \hdots y_n$ (where the integer $y_i$ is the label of the vertex $v_i(\sigma)$ for all $i$) obviously is a permutation of the set $[n]$, whose planar graph is $\mathcal{G}(\sigma)$.

We define $\varphi(\sigma) \in \mathfrak{S}_n$ as this permutation.

For the example $\sigma_0 = 425736981 \in \mathfrak{S}_9$ whose labelled graph $\mathcal{G}(\sigma_0)$ is depicted in Figure \ref{fig:FIG10}, we obtain $\varphi(\sigma_0) = 956382471 \in \mathfrak{S}_9$.

In general, by construction of $\tau = \varphi(\sigma) \in \mathfrak{S}_n$, we have
\begin{equation}
\label{eq:exceedancevalues}
\tau \left( \EXC(\tau) \right) =\{j_k(\sigma),k\in [\inv_2(\sigma)]\}
\end{equation}
and
\begin{equation}
\label{eq:descents}
\DES(\tau) = \begin{cases} 
\{d^k(\sigma),k \in [1,\des_2(\sigma)]\} & \text{if $c_0(\sigma) = 0 (\Leftrightarrow d^0(\sigma) =0)$}, \\ \{d^k(\sigma),k \in [0,\des_2(\sigma)]\} & \text{otherwise}. \end{cases}
\end{equation}
Equality (\ref{eq:exceedancevalues}) provides $$\exc(\tau) = \inv_2(\sigma).$$
By $d^k(\sigma) = d_2^k(\sigma) + c_k(\sigma)$ for all $k$, Equality (\ref{eq:descents}) provides $$\maj(\tau) = \maj_2(\sigma) + \sum_{k \geq 0} c_k(\sigma),$$ and by definition of $(c_k(\sigma))_k$ and Lemma \ref{lem:lemme1} we have $\sum_{k \geq 0} c_k(\sigma) = \sum_{k \geq 0} c^0_k(\sigma) = \inv_2(\sigma) = \exc(\tau)$ hence
$$\maj(\tau) - \exc(\tau) = \maj_2(\sigma).$$
Finally, it is easy to see that $\widetilde{\des_2}(\sigma) = \des_2(\sigma)$ if and only if $c_0(\sigma) = 0$, so Equality (\ref{eq:descents}) also provides 
$$\des(\tau) =  \widetilde{\des_2}(\sigma).$$
As a conclusion, we obtain
$$(\maj(\tau)-\exc(\tau),\des(\tau),\exc(\tau)) = (\maj_2(\sigma),\widetilde{\des_2}(\sigma),\inv_2(\sigma))$$
as required by Theorem \ref{theo:existsbijection}.

\section{Construction of $\varphi^{-1}$}
\label{sec:varphim1}

To end the proof of Theorem \ref{theo:existsbijection}, it remains to show that $\varphi : \Sig_n \rightarrow \Sig_n$ is surjective. Let $\tau \in \Sig_n$. We introduce integers $r \geq 0$, $s = \exc(\tau)$, and
$$0 \leq d^{0,\tau} < d^{1,\tau} < \hdots < d^{r,\tau} < n$$
such that
\begin{align*}
\DES(\tau) &= \{d^{k,\tau},k \in [0,r]\} \cap \mathbb{N}_{>0},\\
d^{0,\tau} &= 0 \Leftrightarrow \tau(1) = 1.
\end{align*}
In particular $\des(\tau) = \begin{cases} r &\text{ if $\tau(1) = 1$,}\\
r+1 &\text{ otherwise}. \end{cases}$

For all $k \in [0,r]$, we define
\begin{align*}
c_k^{\tau} &= \EXC(\tau) \cap ]d^{k-1,\tau},d^{k,\tau}] \text{ (with $d^{-1,\tau} := 0$),}\\
d_2^{k,\tau} &= d^{k,\tau} - c_k^{\tau}.
\end{align*}

We have
$$0 = d_2^{0,\tau} < d_2^{1,\tau} < \hdots < d_2^{r,\tau} < n$$
and similarly as Formula \ref{eq:definitionbk}, we define
\begin{equation}
\label{eq:definitionbkbis}
t_k^{\tau} = \min \{d_2^{l,\tau}, 1 \leq l \leq k, d_2^{l,\tau} = d_2^{k,\tau} - (k-l)\}
\end{equation}
for all $k\in [r]$.

We intend to construct a graph $\mathcal{H}(\tau)$ which is the linear graph of permutation $\sigma \in \Sig_n$ such that $\varphi(\sigma) = \tau$.

\subsection{Skeleton of the graph $\mathcal{H}(\tau)$}

We consider a graph $\mathcal{H}(\tau)$ whose vertices $v_1^{\tau},v_2^{\tau},\hdots,v_n^{\tau}$ (from left to right) are $n$ dots, aligned in a row, among which we box the $d_2^{k,\tau}$-th vertex $v_{d_2^{k,\tau}}^{\tau}$ for all $k \in [r]$. We also draw the end of an arc of circle above every vertex $v_j^{\tau}$ such that $j = \tau(i)$ for some $i \in \EXC(\tau)$. 

For the example $\tau_0 = 956382471 \in \Sig_9$ (whose planar graph is depicted in Figure \ref{fig:FIG10}), we have $r = \des(\tau_0)-1 = 3$ and
\begin{align*}
(c_k^{\tau_0})_{k \in [0,3]} &= (1,2,1,0),\\
(d_2^{k,\tau_0})_{k \in [0,3]} &= (1-1,3-2,5-1,8-0) = (0,1,4,8),\\
\tau_0(\EXC(\tau_0)) &= \{5,6,8,9\},
\end{align*}

and we obtain the graph $\mathcal{H}(\tau_0)$ depicted in Figure \ref{fig:FIG11}.
\begin{figure}[!h]
\begin{minipage}{\textwidth}
\centering
\includegraphics[width=5cm]{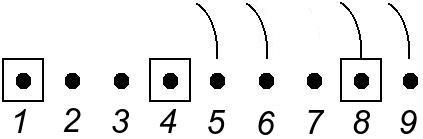}
\caption{Incomplete graph $\mathcal{H}(\tau_0)$.}
\label{fig:FIG11}
\end{minipage}
\end{figure}

In general, by definition of $\varphi(\sigma)$ for all $\sigma \in \Sig_n$, if $\varphi(\sigma) = \tau$, then $r = \des_2(\sigma)$ and $d_2^k(\sigma)$ (respectively $c_k(\sigma),d^k(\sigma),t_k(\sigma)$) equals $d_2^{k,\tau}$ (resp. $c_k^{\tau},d^{k,\tau},t_k^{\tau}$) for all $k \in [0,r]$ and $\{j_l(\sigma),l \in [\inv_2(\sigma)]\} = \tau(\EXC(\tau))$. Consequently, the linear graph of $\sigma$ necessarily have the same skeleton as that of $\mathcal{H}(\tau)$.

The following lemma is easy.

\begin{lem}
\label{lem:3facts}
If $\tau = \varphi(\sigma)$ for some $\sigma \in \Sig_n$, then :
\begin{enumerate}
\label{enum:abc}
\item If $j = \tau(l)$ with $l \in \EXC(\tau)$ such that $l \in ]d_2^{k,\tau},d^{k,\tau}]$, and if $(i,j) \in \INV_2(\sigma)$, then $t_{k}^{\tau} \leq i$.
\item A pair $(i,i+1)$ cannot be a $2$-inversion of $\sigma$ if $i \in \DES_2(\sigma)$ ($\Leftrightarrow$ if the vertex $v_i^{\tau}$ of $\mathcal{H}(\tau)$ is boxed).
\item For all pair $(l,l') \in \EXC(\tau)^2$, if the labels of the two circles $v_l(\sigma)$ and $v_{l'}(\sigma)$ can be exchanged without modifying the skeleton of $\mathcal{G}(\sigma)$, let $i$ and $i'$ such that $(i,l) \in \INV_2(\sigma)$ and $(i',l') \in \INV_2(\sigma)$, then $i < i' \Leftrightarrow l < l'$.
\end{enumerate}
\end{lem}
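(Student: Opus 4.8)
\textbf{Proof plan for Lemma \ref{lem:3facts}.}

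The plan is to verify the three assertions directly from the way $\varphi$ is defined in Section \ref{sec:varphi}, reading each one as a translation of a structural feature of the construction of $\mathcal{G}(\sigma)$ and its labelling. For (1), I would start from the fact that, in the labelled graph $\mathcal{G}(\sigma)$, a circle $v_l(\sigma)$ with $l = \tau(l')$ and $l' \in \EXC(\tau)$ corresponds, via Algorithm \ref{algo:labelcircles}, to some $j_p(\sigma)$, and the position $l'$ of that circle lies in the ascending slope of $c_k(\sigma)$ circles issued from abscissa $d_2^k(\sigma)+1$, where $k = k_i$ is exactly the index with $l' \in {}]d_2^{k,\tau},d^{k,\tau}]$. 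Recall that the $c_k(\sigma)$ circles of that slope were produced in Algorithm \ref{algo:suiteck0} by a maximal sequence of 2-inversions all of whose beginnings $i$ satisfy $t_k(\sigma) \le i$ (condition 2 of the algorithm); hence any 2-inversion $(i,j)$ arriving at a vertex $j = \tau(l')$ counted by $c_k(\sigma)$ has $t_k(\sigma) \le i$. Since $d_2^{k,\tau} = d_2^k(\sigma)$ and $t_k^\tau = t_k(\sigma)$ (as noted just before the lemma), this is precisely the claim.

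For (2), the point is that if $i \in \DES_2(\sigma)$ then $\sigma(i) > \sigma(i+1) + 1$, which is incompatible with $\sigma(i) = \sigma(i+1) + 1$; so $(i,i+1) \notin \INV_2(\sigma)$ by the very definition of a 2-inversion. The parenthetical equivalence ($i \in \DES_2(\sigma) \Leftrightarrow v_i^\tau$ boxed) is immediate from the construction of the skeleton of $\mathcal{H}(\tau)$, where the boxed vertices are exactly those at positions $d_2^{k,\tau} = d_2^k(\sigma)$, $k \in [r]$, i.e. the 2-descents of $\sigma$. For (3), suppose the labels of two circles $v_l(\sigma)$ and $v_{l'}(\sigma)$ can be swapped without altering the skeleton of $\mathcal{G}(\sigma)$; by Algorithm \ref{algo:labelcircles} the final labels of the circles respect the left-to-right order dictated by the sequence $(j_1(\sigma),\dots,j_s(\sigma))$ whenever the constraints coming from the slope structure allow it, and the constraints that force a specific order are exactly the ones recorded in the "remove the $a+b$ greatest / smallest" step, which are the ones that would be violated by a skeleton-changing swap. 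Thus, if swapping is skeleton-neutral, the two circles are ordered left to right in the same way as the corresponding entries $j_l, j_{l'}$ appear in $(j_1(\sigma),\dots,j_s(\sigma))$; and since $j_p(\sigma)$ is the arrival of the 2-inversion $(i_p(\sigma),j_p(\sigma))$ with $i_1(\sigma) < i_2(\sigma) < \cdots < i_s(\sigma)$, the left-to-right order of the circles matches the order of the $i$'s, which is what $i < i' \Leftrightarrow l < l'$ says (here $l = j_p(\sigma)$, $l' = j_{p'}(\sigma)$, $i = i_p(\sigma)$, $i' = i_{p'}(\sigma)$).

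I expect the genuine obstacle to be item (3): it requires pinning down exactly which pairs of circles are "freely exchangeable without modifying the skeleton" and arguing that for each such pair the labelling algorithm really does install the labels in $(j_1,\dots,j_s)$-order. This amounts to unwinding the two clean-up passes of Algorithm \ref{algo:labelcircles} (the $a+b$ removal step and the final "unique remaining integer in sequence order" step) and checking they never impose an order opposite to that of the indices $i_p(\sigma)$. Items (1) and (2) are, as the paper says, "easy": (2) is a one-line contradiction, and (1) is a direct reading-off of condition (2) in Algorithm \ref{algo:suiteck0} together with the identifications $d_2^k(\sigma) = d_2^{k,\tau}$, $t_k(\sigma) = t_k^\tau$ valid whenever $\tau = \varphi(\sigma)$. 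So the write-up should dispatch (1) and (2) briefly and spend its effort making the slope-and-order bookkeeping for (3) precise.
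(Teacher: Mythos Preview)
The paper gives no proof at all: it simply declares ``The following lemma is easy'' and moves on. So there is nothing to compare your argument against line by line; your plan is an attempt to supply what the paper omits.

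Your treatment of (2) is exactly right and complete as stated: $i \in \DES_2(\sigma)$ means $\sigma(i) > \sigma(i+1)+1$, which directly contradicts $\sigma(i) = \sigma(i+1)+1$. Your plan for (3) correctly identifies the mechanism (the final pass of Algorithm~\ref{algo:labelcircles} installs labels in $(j_1(\sigma),\dots,j_s(\sigma))$-order, and that sequence is indexed by increasing $i_p(\sigma)$), and you are right that the real work is checking that the $a+b$-removal pass never forces an order contrary to this.

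For (1), however, your sketch has a gap worth flagging. You write that ``the $c_k(\sigma)$ circles of that slope were produced in Algorithm~\ref{algo:suiteck0} by a maximal sequence of 2-inversions all of whose beginnings $i$ satisfy $t_k(\sigma) \le i$''. But the circles at abscissas in $]d_2^k(\sigma), d^k(\sigma)]$ are governed by $c_k(\sigma)$, not $c_k^0(\sigma)$; Definition~\ref{def:ckgsigma} may have moved circles from slope $k{+}1$ into slope $k$, and those migrated circles were originally justified by 2-inversions with beginnings $\ge t_{k+1}(\sigma)$, not $\ge t_k(\sigma)$. Moreover, the \emph{label} $j$ of a circle is assigned by Algorithm~\ref{algo:labelcircles}, which is a separate process from the 2-inversion bookkeeping of Algorithm~\ref{algo:suiteck0}; you need an argument linking ``the circle at position $l$ receives label $j$'' to ``the 2-inversion $(i,j)$ has $i \ge t_k(\sigma)$''. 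One clean way is to note that $j = \tau(l) > l > d_2^k(\sigma)$, so the arrival $j$ of the 2-inversion lies strictly to the right of $d_2^k(\sigma)$, and then to argue (using that $\sigma$ is strictly decreasing on $[t_k(\sigma), d_2^k(\sigma)+1]$ and the size constraints on $\sigma(i) = \sigma(j)+1$) that the source $i$ cannot lie strictly left of $t_k(\sigma)$. Either route works, but your current wording conflates the counting algorithm with the labelling algorithm and should be tightened.
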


Consequently, in order to construct the linear graph of a permutation $\sigma \in \Sig_n$ such that $\tau = \varphi(\sigma)$ from $\mathcal{H}(\tau)$, it is necessary to extend the arcs of circles of $\mathcal{H}(\tau)$ to reflect the three facts of Lemma \ref{lem:3facts}. When a vertex is necessarily the beginning of an arc of circle, we draw the beginning of an arc of circle above it. When there is only one vertex $v_i^{\tau}$ that can be the beginning of an arc of circle, we complete the latter by making it start from $v_i^{\tau}$.

\begin{ex}
For $\tau_0 = 956382471 \in \Sig_9$, the graph $\mathcal{H}(\tau_0)$ becomes as depicted in Figure \ref{fig:FIG12}.
\begin{figure}[!h]
\begin{minipage}{\textwidth}
\centering
\includegraphics[width=5cm]{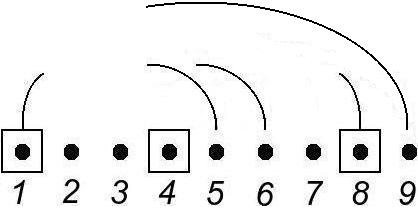}
\caption{Incomplete graph $\mathcal{H}(\tau_0)$.}
\label{fig:FIG12}
\end{minipage}
\end{figure}
Note that the arc of circle ending at $v_6^{\tau_0}$ cannot begin at $v_5^{\tau_0}$ because otherwise, from the third point of Lemma \ref{lem:3facts}, and since $(6,8) = (\tau_0(l),\tau_0(l'))$ with $3 = l < l' = 5$, it would force the arc of circle ending at $v_8^{\tau_0}$ to begin at $v_{i'}^{\tau_0}$ with $6 \leq i'$, which is absurd because a permutation $\sigma \in \Sig_9$ whose linear graph would be of the kind $\mathcal{H}(\tau_0)$ would have $c_2(\sigma) = 2 \neq 1 = c_2^{\tau_0}$. Also, still in view of the third point of Lemma \ref{lem:3facts}, and since $\tau_0^{-1}(9) < \tau^{-1}(6)$, the arc of circle ending at $v_9^{\tau_0}$ must start before the arc of circle ending at $v_6^{\tau_0}$, hence the configuration of $\mathcal{H}(\tau_0)$ in Figure \ref{fig:FIG12}.
\end{ex}

The following two facts are obvious.

\begin{fact}
\label{fact:ascents}
If $\tau = \varphi(\sigma)$ for some $\sigma \in \Sig_n$, then :
\begin{enumerate}
\item A vertex $v_i^{\tau}$ of $\mathcal{H}(\tau)$ is boxed if and only if $i \in \DES_2(\sigma)$. In that case, in particular $i$ is a descent of $\sigma$.
\item If a pair $(i,i+1)$ is not a $2$-descent of $\sigma$ and if $v_i^{\tau}$ is not boxed, then $i$ is an ascent of $\sigma$, \textit{i.e.} $\sigma(i) < \sigma(i+1)$.
\end{enumerate}
\end{fact}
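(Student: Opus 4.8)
The plan is to read off both statements directly from the construction of $\mathcal{G}(\sigma)$ together with the labelling rules of Subsection \ref{subsec:labelling}, using the fact that $\tau$ is by definition the permutation whose planar graph is the labelled $\mathcal{G}(\sigma)$. For the first statement, recall from Definition \ref{def:gzerosigma} and Definition \ref{def:ckgsigma} that the boxed vertices of the linear graph of $\sigma$ sit at the abscissas $d_2^k(\sigma)$ for $k \in [r]$, and that the descents of the unlabelled graph $\mathcal{G}(\sigma)$ are exactly the integers $d^k(\sigma) = d_2^k(\sigma) + c_k(\sigma)$ for $k \in [0,r]$; transporting this through the identification of $\mathcal{H}(\tau)$ with the linear graph of $\sigma$, the boxed vertices $v_i^\tau$ are precisely those with $i \in \{d_2^{k,\tau}, k \in [r]\} = \DES_2(\sigma)$. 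For the ``in particular'' clause, I would argue that if $i = d_2^k(\sigma) \in \DES_2(\sigma)$ then $c_k(\sigma) \geq 0$ forces $d^k(\sigma) \geq d_2^k(\sigma) = i$; since by Lemma \ref{lem:lemme1} (applied with the sequence $(c_k(\sigma))_k$, which shares the properties of $(c^0_k(\sigma))_k$) one has $d^{k-1}(\sigma) = d_2^{k-1}(\sigma) + c_{k-1}(\sigma) < d_2^k(\sigma) = i$, the integer $i$ lies in the half-open interval $]d^{k-1}(\sigma), d^k(\sigma)]$, hence is weakly below the descent $d^k(\sigma)$ of $\mathcal{G}(\sigma)$ and to the right of the previous descent; in a descending slope of the planar graph the values strictly decrease, so $\tau(i) > \tau(i+1)$ whenever $i$ is itself a descent of $\mathcal{G}(\sigma)$, and when $i = d_2^k(\sigma) < d^k(\sigma)$ the vertex $v_i^\tau$ is still interior to that descending slope, giving $\tau(i) > \tau(i+1)$ all the same. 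Thus $i$ is a descent of $\tau$, i.e.\ of $\sigma$ after transport (here ``$i$ is a descent of $\sigma$'' is to be read as ``$i$ is a descent of the permutation whose linear graph is $\mathcal{H}(\tau)$'', which is the same statement).

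For the second statement, suppose $(i,i+1)$ is not a $2$-descent of $\sigma$ and $v_i^\tau$ is not boxed, i.e.\ $i \notin \DES_2(\sigma)$. Then $i$ is neither one of the abscissas $d_2^k(\sigma)$ nor — I claim — a descent $d^k(\sigma)$ of $\mathcal{G}(\sigma)$: indeed the only descents of $\mathcal{G}(\sigma)$ are the $d^k(\sigma)$, and if $i = d^k(\sigma)$ for some $k$ with $c_k(\sigma) = 0$ then $i = d_2^k(\sigma) \in \DES_2(\sigma)$, contradicting $i \notin \DES_2(\sigma)$, while if $c_k(\sigma) > 0$ then $v_i^\tau = v_{d^k(\sigma)}^\tau$ is the bottom of an ascending slope of circles (the $c_k(\sigma)$ circles plotted at abscissas $d_2^k(\sigma)+1, \dots, d_2^k(\sigma)+c_k(\sigma)$), and the vertex immediately to its left is a circle while $v_i^\tau$ is a dot — which by the standing requirement on $\mathcal{G}(\sigma)$ would force them to be in a common \emph{descending} slope, not an ascending one, so this case cannot produce a dot at $v_i^\tau$ either. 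Hence $v_i^\tau$ is a dot that is not a descent of $\mathcal{G}(\sigma)$, so $v_i^\tau$ and $v_{i+1}^\tau$ lie in a common ascending slope of the planar graph of $\tau$, along which the labels strictly increase (property (1) of the labelling, $y_i < y_{i+1}$ iff $v_i, v_{i+1}$ are in a same ascending slope); therefore $\tau(i) < \tau(i+1)$, i.e.\ $i$ is an ascent.

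I expect the main obstacle to be purely bookkeeping: carefully matching the abscissa‐indexed description of $\mathcal{G}(\sigma)$ (circles at $d_2^k(\sigma)+1,\dots,d_2^k(\sigma)+c_k(\sigma)$, dots elsewhere, slopes dictated by $\omega(\sigma)$) with the vertex‐indexed graph $\mathcal{H}(\tau)$ under the identification $\varphi(\sigma)=\tau$, and in particular verifying that an index $i$ can fail to be a descent of $\mathcal{G}(\sigma)$ in exactly the two ways isolated above. Since the slope structure of $\mathcal{G}(\sigma)$ is completely determined by the $(c_k(\sigma))_k$ and by $\omega(\sigma)$ — and the properties of $(c_k(\sigma))_k$ needed (non‐negativity, $d^{k-1}(\sigma) < d_2^k(\sigma)$) are exactly those granted by Lemma \ref{lem:lemme1} — no genuinely new estimate is required; this is why the authors call the two facts ``obvious.''
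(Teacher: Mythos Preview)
Your proposal misreads the statement: both parts are claims about the permutation $\sigma$, not about $\tau$ or about the slope structure of $\mathcal{G}(\sigma)$. In Part~1 the clause ``in particular $i$ is a descent of $\sigma$'' means $\sigma(i)>\sigma(i+1)$, which is immediate from the definition of a $2$-descent ($\sigma(i)>\sigma(i+1)+1$). Your argument instead tries to show that $i$ is a descent of $\tau$, and that is actually false: for the running example $\sigma_0=425736981$, $\tau_0=\varphi(\sigma_0)=956382471$, one has $4\in\DES_2(\sigma_0)$ but $\tau_0(4)=3<8=\tau_0(5)$, so $4$ is \emph{not} a descent of $\tau_0$. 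The step where you argue that $v_i$ sits ``interior to that descending slope'' when $i=d_2^k(\sigma)<d^k(\sigma)$ is exactly where this breaks: at such an $i$ the vertex $v_i$ is a dot followed by a circle, and nothing in the construction forces that pair into a descending slope.

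For Part~2 the hypothesis ``a pair $(i,i+1)$ is not a $2$-descent'' is almost certainly a slip for ``$2$-inversion'' (compare the wording in Lemma~\ref{lem:3facts}(2) and the remark after Figure~\ref{fig:FIG13} that the arrow between $v_7^{\tau_0}$ and $v_8^{\tau_0}$ is undetermined, precisely because $(7,8)\in\INV_2(\sigma_0)$). With that reading the conclusion $\sigma(i)<\sigma(i+1)$ is again a one-line consequence of the definitions: $i\notin\DES_2(\sigma)$ gives $\sigma(i)\le\sigma(i+1)+1$, and $(i,i+1)\notin\INV_2(\sigma)$ rules out $\sigma(i)=\sigma(i+1)+1$. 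Your version instead treats the two hypotheses as synonymous and then tries to prove that $i$ is an ascent of $\tau$; but $3\notin\DES_2(\sigma_0)$ while $3\in\DES(\tau_0)$, so that conclusion is false too, and your case analysis on $i=d^k(\sigma)$ with $c_k(\sigma)>0$ does not exclude it (indeed $3=d^1(\sigma_0)$ with $c_1(\sigma_0)=2$).

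The paper gives no proof here because, once the statement is read as being about $\sigma$, both parts are immediate from the definitions of $\DES_2$ and $\INV_2$ together with the already-established identification $\{d_2^{k,\tau}\}=\DES_2(\sigma)$.
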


To reflect Facts \ref{fact:ascents}, we draw an ascending arrow (respectively a descending arrow) between the vertices $v_i^{\tau}$ and $v_{i+1}^{\tau}$ of $\mathcal{H}(\tau)$ whenever it is known that $\sigma(i) < \sigma(i+1)$ (resp. $\sigma(i) > \sigma(i+1)$)  for all $\sigma \in \Sig_n$ such that $\varphi(\sigma) = \tau$.

For the example $\tau_0 = 956382471 \in \Sig_9$, the graph $\mathcal{H}(\tau_0)$ becomes as depicted in Figure \ref{fig:FIG13}. Note that it is not known yet if there is an ascending or descending arrow between $v_7^{\tau_0}$ and $v_8^{\tau_0}$.

\begin{figure}[!h]
\begin{minipage}{\textwidth}
\centering
\includegraphics[width=5cm]{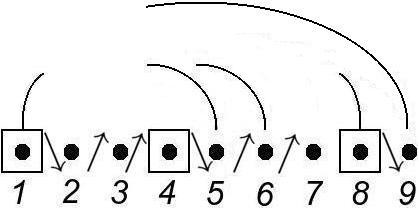}
\caption{Incomplete graph $\mathcal{H}(\tau_0)$.}
\label{fig:FIG13}
\end{minipage}
\end{figure}

\subsection{Completion and labelling of $\mathcal{H}(\tau)$}

The following lemma is analogous to the third point of Lemma \ref{lem:3facts} for the dots instead of the circles and follows straightly from the definition of $\varphi(\sigma)$ for all $\sigma \in \Sig_n$.

\begin{lem}
\label{lem:equivdots} Let $\sigma \in \Sig_n$ such that $\varphi(\sigma) = \tau$.
For all pair $(l,l') \in ([n]\backslash \EXC(\tau))^2$, if the labels of the two dots $v_l(\sigma)$ and $v_{l'}(\sigma)$ can be exchanged without modifying the skeleton of $\mathcal{G}(\sigma)$, let $k$ and $k'$ such that $l = p_k(\sigma)$ and $l' = p_{k'}(\sigma)$, then $\tau(l) < \tau(l') \Leftrightarrow \sigma(u_k(\sigma)) < \sigma(u_{k'}(\sigma))$.
\end{lem}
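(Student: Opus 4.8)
The plan is to read the claim directly off step~3 of Algorithm~\ref{algo:labeldots}, which is the only step that commits each dot of $\mathcal{G}(\sigma)$ to a definite integer label. Recall that the dots are $v_{p_1(\sigma)}(\sigma),\dots,v_{p_{n-s}(\sigma)}(\sigma)$, that they must receive the labels $e_1(\sigma)<\dots<e_{n-s}(\sigma)$, and that step~3 distributes these in increasing order: at stage $m$ one looks at the dots whose current set-label still contains $e_m(\sigma)$ and assigns $e_m(\sigma)$ to the one, say $v_{p_{j(m)}(\sigma)}(\sigma)$, for which $\sigma(u_{j(m)}(\sigma))$ is least. Writing $\tau(l)=e_a(\sigma)$ and $\tau(l')=e_b(\sigma)$, the equivalence $\tau(l)<\tau(l')\Leftrightarrow\sigma(u_k(\sigma))<\sigma(u_{k'}(\sigma))$ is symmetric under exchanging $(l,k)$ with $(l',k')$, so it suffices to prove the single implication $a<b\Rightarrow\sigma(u_k(\sigma))<\sigma(u_{k'}(\sigma))$.

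First I would record the shape of the set-labels after steps~1 and~2: each one is a contiguous block $\{e_c(\sigma),\dots,e_d(\sigma)\}$ of the $e$-sequence (step~1 produces the initial block with $e_d(\sigma)=\max\{e_i(\sigma):e_i(\sigma)\le\min(p_k(\sigma),u_k(\sigma))\}$, and the deletions performed in step~2 keep each set contiguous). Then I would translate the hypothesis---that the labels of $v_l(\sigma)$ and $v_{l'}(\sigma)$ can be exchanged without disturbing the skeleton of $\mathcal{G}(\sigma)$---into the statement that, at stage $a$, the block attached to $v_{p_{k'}(\sigma)}(\sigma)$ still contains $e_a(\sigma)$ (so that this dot is eligible for $e_a(\sigma)$), and in particular that neither dot has been resolved before step~3. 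The underlying point is that a swap leaving every ascent and descent of $\tau$ unchanged, combined with the way steps~1--2 build the blocks out of the positions $p_\bullet(\sigma)$, $u_\bullet(\sigma)$ and the slopes of $\mathcal{G}(\sigma)$, makes the two blocks overlap throughout the interval $[e_a(\sigma),e_b(\sigma)]$ at every stage $\le a$. Making this translation precise---identifying exactly which configurations ``exchangeable without modifying the skeleton'' permits, and checking that step~2's shrinking never destroys the required overlap---is the one genuinely technical point and the step I expect to be the main obstacle; the rest is bookkeeping, and should mirror the argument already used for the circles in point~3 of Lemma~\ref{lem:3facts}.

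Granting the translation, the conclusion is immediate: at stage $a$ both $v_{p_k(\sigma)}(\sigma)$ and $v_{p_{k'}(\sigma)}(\sigma)$ are eligible for $e_a(\sigma)$, step~3 assigns $e_a(\sigma)$ to $v_{p_k(\sigma)}(\sigma)$, hence $\sigma(u_k(\sigma))\le\sigma(u_{k'}(\sigma))$; since $k\ne k'$ forces $u_k(\sigma)\ne u_{k'}(\sigma)$, the inequality is strict. The reverse implication is the same sentence read with $(a,k)$ and $(b,k')$ interchanged. In the write-up I would phrase everything around the single chain of equivalences ``$v_{p_k(\sigma)}(\sigma)$ is labelled at an earlier stage of step~3 than $v_{p_{k'}(\sigma)}(\sigma)$'' $\Leftrightarrow$ ``$\tau(l)<\tau(l')$'' $\Leftrightarrow$ ``$\sigma(u_k(\sigma))<\sigma(u_{k'}(\sigma))$'', which makes the symmetry manifest, and I would end with a quick check of the degenerate configurations (one dot a local extremum of its slope, or a label already forced to a singleton in step~2), which the exchangeability hypothesis is precisely there to exclude.
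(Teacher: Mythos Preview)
Your approach is essentially the paper's: the paper does not give a proof at all, stating only that the lemma ``follows straightly from the definition of $\varphi(\sigma)$,'' and your proposal is precisely an unpacking of that definition via step~3 of Algorithm~\ref{algo:labeldots}. Your identification of the selection rule (at stage $m$, among the eligible dots, the one minimizing $\sigma(u_\bullet(\sigma))$ receives $e_m(\sigma)$) and your reduction to showing that exchangeability forces both dots to be eligible at stage $a=\min(a,b)$ is exactly the right reading, and is more explicit than anything the paper provides.
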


Now, the ascending and descending arrows between the vertices of $\mathcal{H}(\tau)$ introduced earlier, and Lemma \ref{lem:equivdots}, induce a partial order on the set $\{v_i^{\tau},i \in [n]\}$:

\begin{defi}
\label{def:partialorder}
We define a partial order $\succ$ on $\{v_i^{\tau},i \in [n]\}$ by :
\begin{itemize}
\item $v_i^{\tau } \prec v_{i+1}^{\tau}$ (resp. $v_i^{\tau} \succ v_{i+1}^{\tau}$) if there exists an ascending (resp. descending) arrow between $v_i^{\tau}$ and $v_{i+1}^{\tau}$;
\item $v_i^{\tau} \succ v_j^{\tau}$ (with $i<j$) if there exists an arc of circle from $v_i^{\tau}$ to $v_j^{\tau}$;
\item if two vertices $v_i^{\tau}$ and $v_j^{\tau}$ are known to be respectively the $k$-th and $k'$-th vertices of $\mathcal{H}(\tau)$ that cannot be the beginning of a complete arc of circle, let $l$ and $l'$ be respectively the $k$-th and $k'$-th non-exceedance point of $\tau$ (from left to right), if $(l,l')$ fits the conditions of Lemma \ref{lem:equivdots}, then we set $v_i^{\tau} \prec v_j^{\tau}$ (resp. $v_i^{\tau} \succ v_j^{\tau}$) if $\tau(l) < \tau(l')$ (resp. $\tau(l) > \tau(l')$).
\end{itemize} 
\end{defi}

\begin{ex}
\label{ex:partialorder}
For the example $\tau_0 = 956382471$, according to the first point of Definition \ref{def:partialorder}, the arrows of Figure \ref{fig:FIG13} provide
$$v_1^{\tau_0} \succ v_2^{\tau_0} \prec v_3^{\tau_0} \prec v_4^{\tau_0} \succ v_5^{\tau_0} \prec v_6^{\tau_0} \prec v_7^{\tau_0}$$
and
$$v_8^{\tau_0} \succ v_9^{\tau_0}.$$
\end{ex}

\begin{defi}
\label{def:minimalvertices}
A vertex $v_i^{\tau}$ of $\mathcal{H}(\tau)$ is said to be \textit{minimal} on a subset $S \subset [n]$ if $v_i^{\tau} \not \succ v_j^{\tau}$ for all $j \in S$.
\end{defi}

Let $$1 = e_1^{\tau} < e_2^{\tau} < \hdots < e_{n-s}^{\tau}$$ be the non-exceedance values of $\tau$ (\textit{i.e.}, the labels of the dots of the planar graph of $\tau$).

\begin{algo}
\label{algo:varphim1}
Let $S = [n]$ and $l = 1$. While the vertices $\{v_i^{\tau},i \in [n]\}$ have not all been labelled with the elements of $[n]$, apply the following algorithm.
\begin{enumerate}
\item If there exists a unique minimal vertex $v_i^{\tau}$ of $\tau$ on $S$, we label it with $l$, then we set $l := l+1$ and $S := S \backslash \{v_i^{\tau}\}$. Afterwards,
\begin{enumerate}
\item If $v_i^{\tau}$ is the ending of an arc of circle starting from a vertex $v_j^{\tau}$, then we label $v^{\tau}_{j}$ with the integer $l$ and we set $l := l+1$ and $S := S \backslash \{v_j^{\tau}\}$.
\item If $v^{\tau}_{i}$ is the arrival of an incomplete arc of circle (in particular $i = \tau(l)$ for some $l \in \EXC(\tau)$), we intend to complete the arc by making it start from a vertex $v^{\tau}_{j}$ for some integer $j \in [t_k^{\tau},j[$ (where $l \in ]d_2^{k,\tau},d^{k,\tau}]$) in view of the first point of Lemma \ref{lem:3facts}. We choose $v_j^{\tau}$ as the rightest minimal vertex on $[t_k^{\tau},j[ \cap S$ from which it may start in view of the third point of Lemma \ref{lem:3facts}, and we label this vertex $v^{\tau}_{j}$ with the integer $l$. Then we set $l := l+1$ and $S := S \backslash \{v_j^{\tau}\}$.
\end{enumerate}
Now, if there exists an arc of circle from $v_j^{\tau}$ (for some $j$) to $v_i^{\tau}$, we apply steps (a),(b) and (c) to the vertex $v_j^{\tau}$ in place of $v_i^{\tau}$.
\item Otherwise, let $k \geq 0$ be the number of vertices $v_i^{\tau}$ that have already been labelled and that are not the beginning of arcs of circles. Let
$$l_1 < l_2 < \hdots < l_q$$ be the integers $l \in [n]$ such that $l \geq \tau(l) \geq e_{k+1}^{\tau}$ and such that we can exchange the labels of dots $\tau(l)$ and $e_{k+1}^{\tau}$ in the planar graph of $\tau$ without modifying the skeleton of the graph. It is easy to see that $q$ is precisely the number of minimal vertices of $\tau$ on $S$. Let $l_{i_{k+1}} = \tau^{-1}(e_{k+1}^{\tau})$ and let $v_j^{\tau}$ be the $i_{k+1}$-th minimal vertex (from left to right) on $S$. We label $v_j^{\tau}$ with $l$, then we set $l := l+1$ and $S := S \backslash \{v_j^{\tau}\}$, and we apply steps 1.(a), (b) and (c) to $v_j^{\tau}$ instead of $v_i^{\tau}$.
\end{enumerate}
\end{algo}

By construction, the labelled graph $\mathcal{H}(\tau)$ is the linear graph of a permutation $\sigma \in \Sig_n$ such that
$$\DES_2(\sigma) = \{d_2^{k,\tau},k \in [r]\}$$
and
$$\{j_l(\sigma),l \in [\inv_2(\sigma)]\} = \tau(\EXC(\tau)).$$

\begin{ex}
Consider $\tau_0 = 956382471 \in \Sig_9$ whose unlabelled and incomplete graph $\mathcal{H}(\tau_0)$ is depicted in Figure \ref{fig:FIG13}. 
\begin{itemize}
\item As stated in Example \ref{ex:partialorder}, the minimal vertices of $\tau_0$ on $S = [9]$ are $(v_2^{\tau_0},v_5^{\tau_0},v_9^{\tau_0})$. Following step 2 of Algorithm \ref{algo:varphim1}, $k=0$ and the integers $ l \in [9]$ such that $\tau_0(l) \geq e_{k+1}^{\tau_0} = 1$ and such that the labels of dots $\tau_0(l)$ can be exchanged with $1$ in the planar graph of $\tau_0$ (see Figure \ref{fig:FIG10}) are $(l_1,l_2,l_3) = (4,6,9)$. By $\tau_0^{-1}(1) = 9 = l_3$, we label the third minimal vertex on $[9]$, \textit{i.e.} the vertex $v_9^{\tau_0}$, with the integer $l=1$.

Afterwards, following step 1.(b), since $v_9^{\tau_0}$ is the arrival of an incomplete arc of circle starting from a vertex $v_j^{\tau_0}$ with $1 = t_1^{\tau_0} \leq j$, and with $j <5$ because that arc of circle must begin before the arc of circle ending at $v_6^{\tau_0}$ in view of Fact 3 of Lemma \ref{lem:3facts}, we complete that arc of circle by making it start from the unique minimal vertex $v_j^{\tau_0}$ on $[1,5[$, \textit{i.e.} $j=2$, and we label $v_2^{\tau_0}$ with the integer $l = 2$ (see Figure \ref{fig:FIG14}). Note that as from now we know that the arc of circle ending at $v_5^{\tau_0}$ necessarily begins at $v_1^{\tau_0}$, because otherwise $v_1^{\tau_0}$, being the beginning of an arc of circle, would be the beginning of the arc of circle ending at $v_6^{\tau_0}$, which is absurd in view of Fact 3 of Lemma \ref{lem:3facts} because $\tau_0^{-1}(9) < \tau^{-1}(6)$, so we complete that arc of circle by making it start from $v_1^{\tau_0}$, which has been depicted in Figure \ref{fig:FIG14}.

\begin{figure}[!h]
\begin{minipage}{\textwidth}
\centering
\includegraphics[width=5cm]{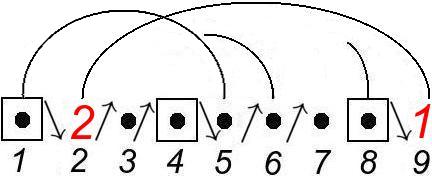}
\caption{Beginning of the labelling of $\mathcal{H}(\tau_0)$.}
\label{fig:FIG14}
\end{minipage}
\end{figure}

We now have $S = [9] \backslash \{2,9\}$ and $l=3$.

\item From Figure \ref{fig:FIG14}, the minimal vertices on $S = [9] \backslash \{2,9\}$ are $(v_3^{\tau_0},v_5^{\tau_0})$. Following step 2 of Algorithm \ref{algo:varphim1}, $k=1$ and the integers $l \in [9]$ such that $l \geq \tau_0(l) \geq e_{k+1}^{\tau_0} = 2$ and such that the labels of dots $\tau_0(l)$ can be exchanged with $2$ in the planar graph of $\tau_0$ (see Figure \ref{fig:FIG10}) are $(l_1,l_2) = (4,6)$. By $\tau_0^{-1}(2) = 6 = l_2$, we label the second minimal vertex on $S$, \textit{i.e.} the vertex $v_5^{\tau_0}$, with the integer $l=3$.

Afterwards, following step 1.(a), since $v_5^{\tau_0}$ is the arrival of the arc of circle starting from the vertex $v_1^{\tau_0}$, we label $v_1^{\tau_0}$ with the integer $l = 4$ (see Figure \ref{fig:FIG15}).

\begin{figure}[!h]
\begin{minipage}{\textwidth}
\centering
\includegraphics[width=5cm]{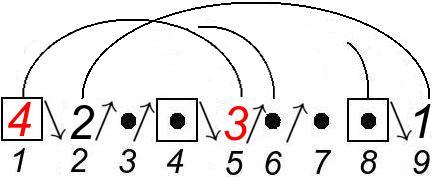}
\caption{Beginning of the labelling of $\mathcal{H}(\tau_0)$.}
\label{fig:FIG15}
\end{minipage}
\end{figure}

We now have $S = [9] \backslash \{1,2,5,9\}$ and $l=5$.

\item From Figure \ref{fig:FIG15}, the minimal vertices on $S = \{3,4,6,7,8\}$ are $(v_3^{\tau_0},v_6^{\tau_0})$. Following step 2 of Algorithm \ref{algo:varphim1}, $k=2$ and the integers $l \in [9]$ such that $l \geq \tau_0(l) \geq e_{k+1}^{\tau_0} = 3$ and such that the labels of dots $\tau_0(l)$ can be exchanged with $3$ in the planar graph of $\tau_0$ (see Figure \ref{fig:FIG10}) are $(l_1,l_2) = (4,7)$. By $\tau_0^{-1}(3) = 4 = l_1$, we label the first minimal vertex on $S$, \textit{i.e.} the vertex $v_3^{\tau_0}$, with the integer $l=5$ (see Figure \ref{fig:FIG16}). Note that as from now we know that the arc of circle ending at $v_6^{\tau_0}$ necessarily begins at $v_4^{\tau_0}$ since it it is the only vertex left it may start from. Consequently, the arc of circle ending at $v_8^{\tau_0}$ necessarily starts from $v_7^{\tau_0}$ (otherwise it would start from $v_6^{\tau_0}$, which is prevented by Definition \ref{def:partialorder} because we cannot have $v_8^{\tau_0} \prec v_6^{\tau_0} \prec v_7^{\tau_0} \prec v_8^{\tau_0}$). The two latter remarks are taken into account in Figure \ref{fig:FIG16}.

\begin{figure}[!h]
\begin{minipage}{\textwidth}
\centering
\includegraphics[width=5cm]{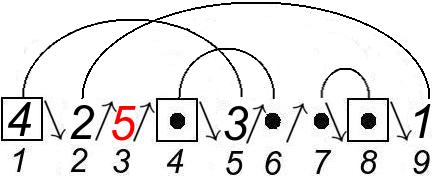}
\caption{Beginning of the labelling of $\mathcal{H}(\tau_0)$.}
\label{fig:FIG16}
\end{minipage}
\end{figure}

We now have $S = \{4,6,7,8\}$ and $l=6$.

\item From Figure \ref{fig:FIG16}, there is only one minimal vertex on $S = \{4,6,7,8\}$, \textit{i.e.} the vertex $v_6^{\tau_0}$. Following step 1 of Algorithm \ref{algo:varphim1}, we label $v_6^{\tau_0}$ with $l=6$.

Afterwards, following step 1.(a), since $v_6^{\tau_0}$ is the arrival of the arc of circle starting from the vertex $v_4^{\tau_0}$, we label $v_4^{\tau_0}$ with the integer $l = 7$ (see Figure \ref{fig:FIG17}).

\begin{figure}[!h]
\begin{minipage}{\textwidth}
\centering
\includegraphics[width=5cm]{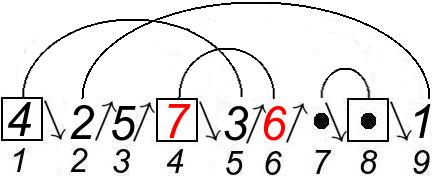}
\caption{Beginning of the labelling of $\mathcal{H}(\tau_0)$.}
\label{fig:FIG17}
\end{minipage}
\end{figure}

We now have $S = \{7,8\}$ and $l=8$.

\item From Figure \ref{fig:FIG17}, there is only one minimal vertex on $S = \{7,8\}$, \textit{i.e.} the vertex $v_6^{\tau_0}$. Following step 1 of Algorithm \ref{algo:varphim1}, we label $v_6^{\tau_0}$ with $l=8$.

Afterwards, following step 1.(a), since $v_8^{\tau_0}$ is the arrival of the arc of circle starting from the vertex $v_7^{\tau_0}$, we label $v_7^{\tau_0}$ with the integer $l = 9$ (see Figure \ref{fig:FIG18}).

\begin{figure}[!h]
\begin{minipage}{\textwidth}
\centering
\includegraphics[width=5cm]{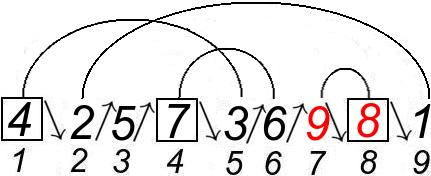}
\caption{Labelled graph $\mathcal{H}(\tau_0)$.}
\label{fig:FIG18}
\end{minipage}
\end{figure}
\end{itemize}

As a conclusion, the graph $\mathcal{H}(\tau_0)$ is the linear graph of the permutation $\sigma_0 = 425736981 \in \Sig_9$, which is mapped to $\tau_0$ by $\varphi$.
\end{ex}

\begin{prop}
\label{prop:bijection}
We have $\varphi(\sigma) = \tau$, hence $\varphi$ is bijective.
\end{prop}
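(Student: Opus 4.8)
The plan is to close the loop between Sections \ref{sec:varphi} and \ref{sec:varphim1} by showing that the two constructions are mutually inverse. Concretely, I would prove Proposition \ref{prop:bijection} in two directions: first, that for the permutation $\sigma$ whose linear graph is the labelled $\mathcal{H}(\tau)$ produced by Algorithm \ref{algo:varphim1}, one has $\varphi(\sigma) = \tau$; and second (or as a consequence, via a counting/cardinality argument) that Algorithm \ref{algo:varphim1} applied to $\varphi(\sigma)$ recovers $\sigma$ itself. Since $\mathfrak{S}_n$ is finite, it actually suffices to establish surjectivity of $\varphi$ — i.e.\ that the $\sigma$ extracted from $\mathcal{H}(\tau)$ satisfies $\varphi(\sigma) = \tau$ — which is exactly what the statement asserts; injectivity then follows automatically, or can be read off from the determinism of the reverse algorithm.

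First I would verify that the skeleton data matches: by the remarks following Algorithm \ref{algo:varphim1}, the permutation $\sigma$ read from the labelled $\mathcal{H}(\tau)$ has $\DES_2(\sigma) = \{d_2^{k,\tau}, k\in[r]\}$ and $\{j_l(\sigma), l\in[\inv_2(\sigma)]\} = \tau(\EXC(\tau))$, and one checks from the way arcs were completed that the $2$-inversion structure of $\sigma$ forces $c_k(\sigma) = c_k^\tau$ for all $k$ (using Lemma \ref{lem:lemme1} to see the $c_k^\tau$ are admissible, and Lemmas \ref{lem:modifsequence}, \ref{lem:3facts} to see no other completion is consistent). This already gives $d^k(\sigma) = d^{k,\tau}$, hence $\mathcal{G}(\sigma)$ has the same underlying unlabelled graph as the planar graph of $\tau$ (same slopes, same circles/dots), and in particular $\des_2(\sigma) = r$, $\widetilde{\des_2}(\sigma) = \des(\tau)$, $\maj_2(\sigma) = \maj(\tau)-\exc(\tau)$, $\inv_2(\sigma) = \exc(\tau)$. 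It then remains to show the labelling produced by Algorithms \ref{algo:labelcircles} and \ref{algo:labeldots} on $\mathcal{G}(\sigma)$ reproduces the labels of the planar graph of $\tau$, i.e.\ $y_i = \tau(i)$ for all $i$.

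For that last point I would argue that Algorithm \ref{algo:varphim1} is precisely the step-by-step reversal of the labelling Algorithms \ref{algo:labelcircles}/\ref{algo:labeldots}, read in increasing order of the label $l$. The key is the third point of Lemma \ref{lem:3facts} together with Lemma \ref{lem:equivdots}: these encode exactly which circles (resp. dots) of $\mathcal{G}(\sigma)$ are ``interchangeable'' without altering the skeleton, and the set-valued intermediate labels in Algorithms \ref{algo:labelcircles}, \ref{algo:labeldots} track precisely this ambiguity. The partial order $\succ$ of Definition \ref{def:partialorder} is built so that its minimal elements on $S$ at stage $l$ are in order-preserving bijection with the set of candidate values for the $l$-th smallest label, and step 1/step 2 of Algorithm \ref{algo:varphim1} selects among them using $\tau^{-1}(e_{k+1}^\tau)$ exactly the choice that Algorithm \ref{algo:labeldots}(3) (resp.\ the rule for circles) would make. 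So an induction on $l$ from $1$ to $n$ shows the vertex labelled $l$ by Algorithm \ref{algo:varphim1} is the vertex carrying label $l$ in the planar graph of $\tau$, while simultaneously the arc-completion substeps 1.(a)--(c) mirror the circle-labelling of Algorithm \ref{algo:labelcircles}; hence $y_i = \tau(i)$ for all $i$ and $\varphi(\sigma) = \tau$. Finiteness of $\mathfrak{S}_n$ then upgrades surjectivity to bijectivity.

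The main obstacle I anticipate is the bookkeeping in that induction: one must show that at every stage the minimal vertices on $S$ correspond bijectively and order-isomorphically to the admissible label-values $l_1 < \dots < l_q$, and that completing an incomplete arc by choosing ``the rightest minimal vertex on $[t_k^\tau, j[\,\cap\, S$'' never blocks a later required completion — this is where Lemma \ref{lem:3facts}(1),(3) and the inequality $c_k^\tau \le d_2^{k+1,\tau} - d_2^{k,\tau}$ (with its equality condition) from Lemma \ref{lem:lemme1} must be combined carefully, since a bad early choice could make some $c_k$ come out wrong, as the $\tau_0$ example with $v_5^{\tau_0}$ vs $v_6^{\tau_0}$ illustrates. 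Making this ``no deadlock / unique consistent completion'' claim precise, rather than merely plausible on the running example, is the crux of the argument.
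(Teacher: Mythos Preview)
Your proposal is correct and follows essentially the same approach as the paper's own proof: first match the skeletons (so that $\mathcal{G}(\sigma)$ and the planar graph of $\tau$ coincide as unlabelled graphs), then use the interchangeability criteria of Lemma~\ref{lem:3facts}(3) for circles and Lemma~\ref{lem:equivdots} for dots to conclude that the labellings agree. The paper's argument is terser than yours---it disposes of $c_k(\sigma)=c_k^{\tau}$ and the ``no deadlock'' issue with a single ``by construction'' and compares the labellings directly via the exchange equivalences rather than through an explicit induction on $l$---but the content is the same.
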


\begin{proof}
By construction , for all $k \in [0,\des_2(\sigma)] = [0,r]$,
\begin{align*}
d_2^k(\sigma) &= d^{k,\tau} - c_k^{\tau},\\
c_k(\sigma) &= c_k^{\tau},\\
d^k(\sigma) &= d_2^k(\sigma) + c_k(\sigma) = d_2^{k,\tau}+c_k^{\tau} = d^{k,\tau},
\end{align*}
so $\mathcal{G}(\sigma)$ has the same skeleton as the planar graph of $\tau$, \textit{i.e.} $\DES(\varphi(\sigma)) = \DES(\tau)$ and $\EXC(\varphi(\sigma)) = \EXC(\tau)$.

The labels of the circles of $\mathcal{G}(\sigma)$ are the elements of
$$\{j_l(\sigma),l \in [s]\} = \tau(\EXC(\tau)),$$
and by construction of $\sigma$, every pair $(l,l') \in \EXC(\tau)^2$ such that we can exchange the labels $\tau(l)$ and $\tau(l')$ in the planar graph of $\tau$ is such that
$$i < i' \Leftrightarrow l < l'$$
where $(i,\tau(l))$ and $(i,\tau(l'))$ are the two corresponding $2$-inversions of $\sigma$. Consequently, by definition of $\varphi(\sigma)$, the labels of the circles of $\mathcal{G}(\sigma)$ appear in the same order as in the planar graph of $\tau$ (\textit{i.e.} $\varphi(\sigma)(i) = \tau(i)$ for all $i \in \EXC(\varphi(\sigma)) = \EXC(\tau)$).

As a consequence, the dots of $\mathcal{G}(\sigma)$ and the planar graph of $\tau$ are labelled by the elements 
$$1 = e_1(\sigma) = e_1^{\tau} < e_2(\sigma) = e_2^{\tau} < \hdots < e_{n-s}(\sigma) = e_{n-s}^{\tau}.$$
As for the labels of the circles, to show that the above integers appear in the same order among the labels of $\mathcal{G}(\sigma)$ and the planar graph of $\tau$, it suffices to prove that 
$$\varphi(\sigma)^{-1}(e_i^{\tau}) < \varphi(\sigma)^{-1}(e_j^{\tau}) \Leftrightarrow \tau^{-1}(e_i^{\tau}) < \tau^{-1}(e_j^{\tau})$$
for all pair $(i,j)$ such that we can exchange the labels $e_i^{\tau}$ and $e_j^{\tau}$ in the planar graph of $\tau$ (hence in $\mathcal{G}(\sigma)$ since the two graphs have the same skeleton). This is guaranteed by Definition \ref{def:partialorder} because the vertices $v_i^{\tau}$ that are not the beginning of an arc of circle correspond with the labels of the dots of the planar graph of $\tau$.

As a conclusion, the planar graph of $\tau$ is in fact $\mathcal{G}(\sigma)$, \textit{i.e.} $\tau = \varphi(\sigma)$.
\end{proof}

\section{Open problem}
In view of Formula (\ref{eq:hanceli}) and Theorem \ref{theo:existsbijection}, it is natural to look for a bijection $\Sig_n \rightarrow \Sig_n$ that maps $(\maj_2,\widetilde{\des_2},\inv_2)$ to $(\amaj_2,\widetilde{\asc_2},\ides)$.

Recall that $\ides = \des_2$ and that for a permutation $\sigma \in \Sig_n$, the equality $\widetilde{\des_2}(\sigma) = \des_2(\sigma)$ is equivalent to $\varphi(\sigma)(1) = 1$, which is similar to the equivalence $\widetilde{\asc_2}(\tau) = \asc_2(\tau) \Leftrightarrow \tau(1) = 1$ for all $\tau \in \Sig_n$.

Note that if $\DES_2(\sigma) = \bigsqcup_{p = 1}^r [i_p,j_p]$ with $j_p+1 < i_{p+1}$ for all $p$, the permutation $\pi = \rho_1 \circ \rho_2 \circ \hdots \circ \rho_r \circ \sigma$, where $\rho_p$ is the $(j_p-i_p+2)$-cycle
$$\begin{pmatrix}
 i_p & i_p+1 & i_p+2 & \hdots & j_p & j_p+1\\
 \sigma(j_p+1) & \sigma(j_p) & \sigma(j_p-1) & \hdots & \sigma(i_p+1) & \sigma(i_p) 
 \end{pmatrix}$$
 for all $p$, is such that $\DES_2(\sigma) \subset \ASC_2(\pi)$ and $\INV_2(\sigma) = \INV_2(\pi)$. One can try to get rid of the eventual unwanted $2$-ascents $i \in \ASC_2(\pi) \backslash \DES_2(\sigma)$ by composing $\pi$ with adequate permutations.
 
 \nocite{*}

\section*{References}
\bibliographystyle{hep}
\bibliography{biblio}
\label{sec:biblio}

\end{document}